\newcommand{\scal}[2]{\langle #1,#2\rangle}
\newcommand{\rr}[1]{\mathbf R^{#1}}
\newcommand{\nm}[2]{\Vert #1\Vert _{#2}}
\newcommand{\nmm}[1]{\Vert #1\Vert }
\newcommand{\abp}[1]{\vert #1\vert}
\newcommand{\op}{\operatorname{Op}}
\newcommand{\fy}{\varphi}
\newcommand{\cdo}{\, \cdot \, }
\newcommand{\eabs}[1]{\langle #1\rangle}
\newcommand{\vrum}{\vspace{0.1cm}}
\newcommand{\ww}{{\text w}}
\newcommand{\wpr}{{\text{\footnotesize $\#$}}}
\newcommand{\essup}[1]{\underset{#1}{\operatorname {ess\, sup}}{\, }}
\numberwithin{equation}{section}          
\newtheorem{thm}{Theorem}
\numberwithin{thm}{section}
\newtheorem*{tom}{\rubrik}
\newcommand{\rubrik}{}
\newtheorem{prop}[thm]{Proposition}
\newtheorem{cor}[thm]{Corollary}
\newtheorem{lemma}[thm]{Lemma}
\theoremstyle{definition}
\newtheorem{defn}[thm]{Definition}
\theoremstyle{remark}
\newtheorem{rem}[thm]{Remark}              
\author{Joachim Toft}
\address{Department of Mathematics and Systems Engineering,
V{\"a}xj{\"o} University, Sweden}
\email{joachim.toft@vxu.se}
\title{Twisted convolution, pseudo-differential operators and
Fourier modulation spaces}
\begin{document}

\begin{abstract}
We discuss continuity of the twisted convolution on (weighted) Fourier modulation spaces. We use these results to establish continuity results for the twisted convolution on Lebesgue spaces. For example we prove that if $\omega$ is an appropriate weight and $1\le p\le 2$, then $L^p_{(\omega )}$ is an algebra under the twisted convolution.

\par

We also discuss continuity for pseudo-differential operators with symbols in Fourier modulation spaces.
\end{abstract}

\par

\maketitle

\section{Introduction}\label{sec0}

\par

In this paper we continue the discussions from \cite{HTW} concerning
continuity and algebraic properties for pseudo-differential operators
in background of Lebesgue spaces and the theory of modulation
spaces. These investigations also involve studies of twisted
convolutions, which are connected to the pseudo-differential
calculus, in the sense that the Fourier transform of a Weyl product
is essentially a twisted convolution of the Weyl symbols. By combining
the latter property with continuity results of the Weyl product on
modulation spaces in \cite{HTW}, we establish continuity properties of
the twisted convolution on Fourier modulation spaces. From these
results we thereafter prove continuity properties of
the twisted convolution on weighted Fourier Lebesgue spaces.

\par

We also consider continuity for pseudo-differential operators
with symbols in Fourier modulation spaces. We establish continuity
properties of such pseudo-differential operators when acting between
modulation spaces and Fourier modulation spaces. These investigations
are based on an important result by Cordero and Okoudjou in \cite{CO1}
concerning mapping properties of short-time Fourier transforms on
modulation spaces.

\par

The (classical) modulation spaces $M^{p,q}$, $p,q \in [1,\infty]$, as
introduced by Feichtinger in \cite{Fei3},
consist of all tempered distributions whose
short-time Fourier transforms (STFT) have finite mixed $L^{p,q}$
norm. (Cf. \cite{Fei6} and the references therein for an updated describtion modulation spaces.)  It follows that the parameters $p$ and $q$ to some extent
quantify the degrees of asymptotic decay and singularity of
the distributions in $M^{p,q}$. The theory of modulation spaces was
developed further and generalized in
\cite{Fei4,Fei5,FG1,FG2,FG3,Groc1}, where
Feichtinger and Gr{\"o}chenig established the theory of coorbit
spaces. In particular, the modulation spaces $M^{p,q}_{(\omega )}$ and
$W^{p,q}_{(\omega )}$,
where $\omega$ denotes a weight function on
phase (or time-frequency shift) space, appears as the set of tempered
(ultra-) distributions
whose STFT belong to the weighted and mixed Lebesgue space
$L^{p,q}_{1,(\omega )}$ and $L^{p,q}_{2,(\omega )}$ respectively. (See
Section \ref{sec1} for strict definitions.) By choosing the weight
$\omega$ in appropriate ways, the space
$W^{p,q}_{(\omega )}$ becomes a Wiener amalgam space,
introduced in \cite {Fei1} by Feichtinger.

\par

A major idea behind the design of these spaces was to find
useful Banach spaces, which are defined in a way similar to Besov
and Triebel-Lizorkin spaces, in the sense of replacing the dyadic
decomposition on the Fourier transform side, characteristic to Besov
and Triebel-Lizorkin spaces, with a
\textit{uniform} decomposition. From the construction of these
spaces, it turns out that modulation spaces of the form $M^{p,q}_{(\omega )}$ and Besov spaces in some
sense are rather similar, and sharp embeddings between these
spaces can be found in \cite{To4, To6}, which
are improvements of certain embeddings in \cite {Grob}. (See
also \cite {ST1} for verification of the sharpness.) In the same
way it follows that modulation spaces of the form $W^{p,q}_{(\omega )}$ and Triebel-Lizorkin
spaces are rather similar.

\par

During the last 15 years many results have been proved which confirm
the usefulness of the modulation
spaces and their Fourier transforms in time-frequency analysis, where
they occur naturally. For example, in
\cite{FG3,Groc2,GL1}, it is shown that all
such spaces admit reconstructible sequence space representations
using Gabor frames.

\par

Parallel to this development, modulation spaces have been incorporated
into the calculus of pseudo-differential operators.
In fact, in \cite{Sj1}, Sj{\"o}strand introduced
the modulation space $M^{\infty,1}$, which contains non-smooth
functions, as a symbol class and proved that
$M^{\infty,1}$ corresponds to an algebra of operators which are
bounded on $L^2$.

\par

Gr{\"o}chenig and Heil thereafter proved in
\cite{GH1,Groc2} that each operator with symbol in
$M^{\infty,1}$ is continuous on all modulation spaces $M^{p,q}$, $p,q
\in [1,\infty]$. This extends Sj{\"o}strand's result since
$M^{2,2}=L^2$. Some generalizations to operators with
symbols in general unweighted modulation spaces were obtained in
\cite{GH2,To4}, and in \cite{To5, To7} some further
extensions  involving weighted
modulation spaces are presented. Modulation spaces in
pseudodifferential calculus is currently an active field of research
(see e.{\,}g.
\cite{Groc3, GH2, GH3, Her1, La1, La2,
PT2, ST1, Ta, Te1, To2, To4, To7}).

\par

In the Weyl calculus of pseudo-differential operators, operator
composition corresponds
on the symbol level to the Weyl product, sometimes also called the
twisted product, denoted by $\wpr$. A problem in this field is to find
conditions on the weight functions $\omega _j$ and
$p_j,q_j\in [1,\infty]$, that are necessary and sufficient for the map
\begin{equation}\label{weylmap}
\mathscr S(\rr {2d})\times \mathscr S(\rr {2d}) \ni (a_1,a_2)\mapsto
a_1\wpr a_2 \in \mathscr S(\rr {2d})
\end{equation}
to be uniquely extendable to a map from $M_{(\omega _1)}^{p_1,q_1}(\rr
{2d}) \times M_{(\omega _2)}^{p_2,q_2}(\rr {2d})$ to $M_{(\omega
_0)}^{p_0,q_0}(\rr {2d})$,
which is continuous in the sense that for some constant $C>0$ it holds
\begin{equation}\label{holderyoung3}
\| a_1 \wpr a_2 \|_{M_{(\omega _0)}^{p_0,q_0} }
\leq C \| a_1 \|_{M_{(\omega _1)}^{p_1,q_1} } \| a_2 \|_{M_{(\omega
_2)}^{p_2,q_2} },
\end{equation}
when $a_1 \in M_{(\omega _1)}^{p_1,q_1}(\rr {2d})$ and $a_2 \in
M_{(\omega _2)}^{p_2,q_2}(\rr {2d})$.
Important contributions in this context can be found in
\cite{Groc3,HTW,La1,Sj1, To2}, where Theorem 0.3$'$ in
\cite{HTW} seems to be the most general result so far.

\par

The Weyl product on the Fourier transform side
is given by a twisted convolution, $*_\sigma$. It follows that the
continuity questions here above are the same as finding appropriate
conditions on $\omega _j$ and $p_j,q_j\in [1,\infty]$, in order for
the map
\begin{equation}\label{twistmap}
\mathscr S(\rr {2d})\times \mathscr S(\rr {2d}) \ni (a_1,a_2)\mapsto
a_1*_\sigma a_2 \in \mathscr S(\rr {2d})
\end{equation}
to be uniquely extendable to a map from $W_{(\omega _1)}^{p_1,q_1}(\rr
{2d}) \times W_{(\omega _2)}^{p_2,q_2}(\rr {2d})$ to $W_{(\omega
_0)}^{p_0,q_0}(\rr {2d})$, which is continuous in the sense that for
some constant $C>0$ it holds
\begin{equation}\label{holderyoung4}
\| a_1 *_\sigma a_2 \|_{W_{(\omega _0)}^{p_0,q_0} }
\leq C \| a_1 \|_{W_{(\omega _1)}^{p_1,q_1} } \| a_2 \|_{W_{(\omega
_2)}^{p_2,q_2} },
\end{equation}
when $a_1 \in W_{(\omega _1)}^{p_1,q_1}(\rr {2d})$ and $a_2 \in
W_{(\omega _2)}^{p_2,q_2}(\rr {2d})$. In this context the continuity
result which corresponds to Theorem 0.3$'$ in \cite{HTW} is Theorem
\ref{algthm2} in Section \ref{sec2}.

\par

In the end of Section \ref{sec2} we especially consider the case when
$p_j=q_j=2$. In this case, $W_{(\omega _j)}^{2,2}$ agrees with
$L^2_{(\omega _j)}$, for appropriate choices of $\omega _j$. Hence,
for such $\omega _j$, it
follows immediately from Theorem \ref{algthm2} that the map
\eqref{twistmap} extends to a continuous mapping from $L^2_{(\omega
_1)}(\rr {2d})\times L^2_{(\omega _2)}(\rr {2d})$ to $L^2_{(\omega
_0)}(\rr {2d})$, and that
$$
\nm {a_1*_\sigma a_2}{L^2_{(\omega _0)}}\le C\nm {a_1}{L^2_{(\omega
_1)}}\nm {a_2}{L^2_{(\omega _2)}},
$$
when $a_1\in L^2_{(\omega _1)}(\rr {2d})$ and $a_2\in L^2_{(\omega
_2)}(\rr {2d})$. In Section \ref{sec2} we prove a more general result,
by combining this result with Young's inequality, and then using
interpolation. Finally we use these results in Section \ref{sec2.5} to enlarge the class of possible window functions in the definition of modulation space norm.

\par

\section{Preliminaries}\label{sec1}

\par

In this section we recall some notations and basic results. The proofs
are in general omitted.

\par

We start by discussing appropriate conditions for the involved weight
functions. Assume that $\omega$ and $v$ are positive and measureable
functions on $\rr d$. Then $\omega$ is called $v$-moderate if
\begin{equation}\label{moderate}
\omega (x+y) \leq C\omega (x)v(y)
\end{equation}
for some constant $C$ which is independent of $x,y\in \rr d$. If $v$
in \eqref{moderate} can be chosen as a polynomial, then $\omega$ is
called polynomially moderated. We let $\mathscr P(\rr d)$ be the set
of all polynomially moderated functions on $\rr d$. If $\omega (x,\xi
)\in \mathscr P(\rr {2d})$ is constant with respect to the
$x$-variable ($\xi$-variable), then we sometimes write $\omega (\xi )$
($\omega (x)$) instead of $\omega (x,\xi )$. In this case we consider
$\omega$ as an element in $\mathscr P(\rr {2d})$ or in $\mathscr P(\rr
d)$ depending on the situation.

\medspace

The Fourier transform $\mathscr F$ is the linear and continuous
mapping on $\mathscr S'(\rr d)$ which takes the form
$$
(\mathscr Ff)(\xi )= \widehat f(\xi ) \equiv (2\pi )^{-d/2}\int _{\rr
{d}} f(x)e^{-i\scal  x\xi }\, dx
$$
when $f\in L^1(\rr d)$. We recall that $\mathscr F$ is a homeomorphism
on $\mathscr S'(\rr d)$ which restricts to a homeomorphism on $\mathscr
S(\rr d)$ and to a unitary operator on $L^2(\rr d)$.

\par

Let $\fy \in \mathscr S'(\rr d)$ be fixed, and let $f\in \mathscr
S'(\rr d)$. Then the short-time Fourier transform $V_\fy f(x,\xi )$ of
$f$ with respect to the \emph{window function} $\fy$ is the tempered
distribution on $\rr {2d}$ which is defined by
$$
V_\fy f(x,\xi ) \equiv \mathscr F(f \, \overline {\fy (\cdo -x)}(\xi ).
$$
If $f ,\fy \in \mathscr S(\rr d)$, then it follows that
$$
V_\fy f(x,\xi ) = (2\pi )^{-d/2}\int f(y)\overline {\fy
(y-x)}e^{-i\scal y\xi}\, dy .
$$

\medspace

Next we recall some properties on modulation spaces and their Fourier
transforms. Assume that
$\omega \in \mathscr P(\rr {2d})$ and that $p,q\in [1,\infty ]$. Then
the mixed Lebesgue space $L^{p,q}_{1,(\omega )}(\rr {2d})$ consists of all
$F\in L^1_{loc}(\rr {2d})$ such that $\nm F{L^{p,q}_{1,(\omega )}}<\infty$, and
$L^{p,q}_{2,(\omega )}(\rr {2d})$ consists of all
$F\in L^1_{loc}(\rr {2d})$ such that $\nm F{L^{p,q}_{2,(\omega )}}<\infty$. Here
\begin{align*}
\nm F{L^{p,q}_{1,(\omega )}} &= \Big (\int \Big (\int |F(x,\xi )\omega
(x,\xi )|^p\, dx\Big )^{q/p}\, d\xi \Big )^{1/q},
\intertext{and}
\nm F{L^{p,q}_{2,(\omega )}} &= \Big (\int \Big (\int |F(x,\xi )\omega
(x,\xi )|^q\, d\xi \Big )^{p/q}\, dx \Big )^{1/p},
\end{align*}
with obvious modifications when $p=\infty$ or $q=\infty$.

\par

Assume that $p,q\in [1,\infty ]$, $\omega \in \mathscr P(\rr
{2d})$ and $\fy \in \mathscr S(\rr d)\setminus 0$ are fixed. Then the
\emph{modulation space} $M^{p,q}_{(\omega )}(\rr d)$ is the Banach
space which consists of all $f\in \mathscr S'(\rr d)$ such that
\begin{equation}\label{modnorm}
\nm f{M^{p,q}_{(\omega )}}\equiv \nm {V_\fy f}{L^{p,q}_{1,(\omega
)}}<\infty .
\end{equation}
The modulation space $W^{p,q}_{(\omega )}(\rr d)$ is the Banach space
which consists of all $f\in \mathscr S'(\rr d)$ such that
\begin{equation}\label{fourmodnorm}
\nm f{W^{p,q}_{(\omega )}}\equiv \nm {V_\fy f}{L^{p,q}_{2,(\omega
)}}<\infty .
\end{equation}
The definitions of $M^{p,q}_{(\omega )}(\rr d)$ and
$W^{p,q}_{(\omega )}(\rr d)$ are independent of the choice of $\fy$
and different $\fy$ gives rise to equivalent norms. (See Proposition
\ref{p1.4} below). From the fact that
$$
V_{\widehat \fy}\widehat f (\xi ,-x) =e^{i\scal x\xi}V_{\check
\fy}f(x,\xi ),\qquad \check \fy (x)=\fy (-x),
$$
it follows that
$$
f\in W^{p,q}_{(\omega )}(\rr d)\quad \Longleftrightarrow \quad \widehat f\in
M^{p,q}_{(\omega _0)}(\rr d),\qquad \omega _0(\xi ,-x)=\omega (x,\xi ).
$$

\par

For conveniency we set $M^p _{(\omega )}= M^{p,p}_{(\omega
)}$, which agrees with $W^p_{(\omega )}=W^{p,p}_{(\omega
)}$. Furthermore we set $M^{p,q}=M^{p,q}_{(\omega )}$ and
$W^{p,q}=W^{p,q}_{(\omega )}$ if $\omega \equiv 1$. If $\omega$ is
given by $\omega (x,\xi )=\omega _1(x)\omega _2(\xi )$, for some
$\omega _1,\omega _2\in \mathscr P(\rr d)$, then $W^{p,q}_{(\omega )}$
is a Wiener amalgam space, introduced by Feichtinger in \cite{Fei1}.

\par

The proof of the following proposition is omitted, since the results
can be found in \cite {Fei2,Fei3,FG1,FG2,FG3, Groc2,To4,
To5, To6, To7}. Here and in what follows, $p'\in[1,\infty]$ denotes the
conjugate exponent of $p\in[1,\infty]$, i.{\,}e. $1/p+1/p'=1$ should
be fulfilled.

\par

\begin{prop}\label{p1.4}
Assume that $p,q,p_j,q_j\in [1,\infty ]$ for $j=1,2$, and $\omega
,\omega _1,\omega _2,v\in \mathscr P(\rr {2d})$ are such that $\omega$
is $v$-moderate and $\omega _2\le C\omega _1$ for some constant
$C>0$. Then the following are true:
\begin{enumerate}
\item[{\rm{(1)}}] if $\fy \in M^1_{(v)}(\rr d)\setminus 0$, then $f\in
M^{p,q}_{(\omega )}(\rr d)$ if and only if \eqref{modnorm} holds,
i.{\,}e. $M^{p,q}_{(\omega )}(\rr d)$. Moreover, $M^{p,q}_{(\omega )}$
is a Banach space under the norm in \eqref{modnorm} and different
choices of $\fy$ give rise to equivalent norms;

\vrum

\item[{\rm{(2)}}] if  $p_1\le p_2$ and $q_1\le q_2$  then
$$
\mathscr S(\rr d)\hookrightarrow M^{p_1,q_1}_{(\omega _1)}(\rr
n)\hookrightarrow M^{p_2,q_2}_{(\omega _2)}(\rr d)\hookrightarrow
\mathscr S'(\rr d)\text ;
$$

\vrum

\item[{\rm{(3)}}] the $L^2$ product $( \cdo ,\cdo )$ on $\mathscr
S$ extends to a continuous map from $M^{p,q}_{(\omega )}(\rr
n)\times M^{p'\! ,q'}_{(1/\omega )}(\rr d)$ to $\mathbf C$. On the
other hand, if $\nmm a = \sup \abp {(a,b)}$, where the supremum is
taken over all $b\in \mathscr {S}(\rr d)$ such that
$\nm b{M^{p',q'}_{(1/\omega )}}\le 1$, then $\nmm {\cdot}$ and $\nm
\cdot {M^{p,q}_{(\omega )}}$ are equivalent norms;

\vrum

\item[{\rm{(4)}}] if $p,q<\infty$, then $\mathscr S(\rr d)$ is dense in
$M^{p,q}_{(\omega )}(\rr d)$ and the dual space of $M^{p,q}_{(\omega
)}(\rr d)$ can be identified
with $M^{p'\! ,q'}_{(1/\omega )}(\rr d)$, through the form $(\cdo  ,\cdo
)_{L^2}$. Moreover, $\mathscr S(\rr d)$ is weakly dense in $M^{\infty
}_{(\omega )}(\rr d)$.
\end{enumerate}
Similar facts hold if the $M^{p,q}_{(\omega )}$ spaces are replaced by
$W^{p,q}_{(\omega )}$ spaces.
\end{prop}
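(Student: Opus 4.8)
The plan is to derive all four statements (and their $W$-space analogues) from three standard facts about the short-time Fourier transform: Moyal's identity, which in the form I shall use reads $(V_\varphi f,V_\varphi g)_{L^2(\rr {2d})}=\nm \varphi {L^2}^2\,(f,g)_{L^2(\rr d)}$; the change-of-window (reproducing) estimate
$$
\abp{V_\varphi f(x,\xi )}\le \frac{1}{\abp{(\gamma ,\psi )}}\big(\abp{V_\psi f}*\abp{V_\varphi \gamma }\big)(x,\xi ),\qquad (\gamma ,\psi )\neq 0;
$$
and weighted versions of Young's and Hölder's inequalities adapted to the mixed norms $L^{p,q}_{1,(\omega )}$. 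To prove (1), I would fix a reference window $\gamma \in \mathscr S(\rr d)$ and insert the reproducing estimate into the definition of the norm. The moderateness hypothesis $\omega (z+w)\le C\omega (z)v(w)$ turns the convolution bound into a weighted Young estimate
$$
\nm {V_\varphi f}{L^{p,q}_{1,(\omega )}}\le C\,\nm {V_\varphi \gamma }{L^1_{(v)}}\,\nm {V_\psi f}{L^{p,q}_{1,(\omega )}}.
$$
The assumption $\varphi \in M^1_{(v)}$ is exactly what guarantees that the kernel $V_\varphi \gamma$ lies in $L^1_{(v)}$, so the right-hand side is finite; interchanging the roles of $\varphi$ and $\psi$ gives the reverse inequality, hence norm equivalence and independence of the window. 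The Banach-space property is then inherited from completeness of $L^{p,q}_{1,(\omega )}$ through the isometry $f\mapsto V_\varphi f$.

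For (2), the inclusion $\mathscr S(\rr d)\hookrightarrow M^{p_1,q_1}_{(\omega _1)}$ is immediate, since $f,\gamma \in \mathscr S$ forces $V_\gamma f\in \mathscr S(\rr {2d})$, whose product with the polynomially bounded weight $\omega _1$ has finite mixed norm. The genuinely non-formal step is the continuous inclusion $M^{p_1,q_1}_{(\omega _1)}\hookrightarrow M^{p_2,q_2}_{(\omega _2)}$ for $p_1\le p_2$, $q_1\le q_2$: here one cannot simply invoke $L^{p_1}\subset L^{p_2}$, which fails on $\rr d$. Instead I would use that $V_\gamma f$ reproduces itself under convolution with $V_\gamma \gamma \in L^1$, which places it in a Wiener amalgam space; the resulting local-$L^\infty$/global-$\ell^p$ structure (equivalently, a Gabor-frame discretization as in the cited references) yields $\nm {V_\gamma f}{L^{p_2,q_2}_2}\le C\nm {V_\gamma f}{L^{p_1,q_1}_1}$, and combining this with $\omega _2\le C\omega _1$ gives the embedding. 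The final inclusion $M^{p_2,q_2}_{(\omega _2)}\hookrightarrow \mathscr S'(\rr d)$ follows from the bounded pairing established in (3).

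For (3), Moyal's identity rewrites $(f,g)_{L^2}=\nm \gamma {L^2}^{-2}(V_\gamma f,V_\gamma g)_{L^2(\rr {2d})}$; inserting the factor $\omega \cdot \omega ^{-1}$ and applying the mixed-norm Hölder inequality bounds this by $C\,\nm {V_\gamma f}{L^{p,q}_{1,(\omega )}}\nm {V_\gamma g}{L^{p',q'}_{1,(1/\omega )}}$, which is the asserted continuity of the form. The converse estimate, namely that $\sup _b\abp{(a,b)}$ recovers $\nm a{M^{p,q}_{(\omega )}}$ up to a constant, is the delicate point: one constructs a near-extremizer for Hölder's inequality on the STFT side and then transports it back through the inversion formula, checking that it may be taken in $\mathscr S$ by a density argument. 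Part (4) rests on the same circle of ideas: for $p,q<\infty$ density of $\mathscr S$ is obtained by truncating and mollifying $V_\gamma f$ (where compactly supported smooth functions are dense in $L^{p,q}$ with finite exponents) and pulling back by inversion, and the identification of the dual follows from Hahn--Banach together with $(L^{p,q}_{1,(\omega )})^*=L^{p',q'}_{1,(1/\omega )}$, using that $V_\gamma$ embeds $M^{p,q}_{(\omega )}$ isometrically onto a closed subspace; the realized pairing is $(\cdo ,\cdo )_{L^2}$, and weak-$*$ density in $M^\infty _{(\omega )}$ follows likewise.

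All statements for the $W^{p,q}_{(\omega )}$ spaces are obtained by repeating these arguments verbatim with $L^{p,q}_{1,(\omega )}$ replaced by $L^{p,q}_{2,(\omega )}$ (Fubini merely exchanges the order of the two integrations), or by transferring the $M$-space results through the relation $V_{\widehat \varphi }\widehat f(\xi ,-x)=e^{i\scal x\xi }V_{\check \varphi }f(x,\xi )$ recorded before the proposition. I expect the main obstacles to be the self-improving amalgam estimate underlying the embedding $M^{p_1,q_1}\hookrightarrow M^{p_2,q_2}$ and the converse direction in (3), which drives the dual identification in (4); the remaining steps are bookkeeping with the weighted Young and Hölder inequalities and the moderateness of $\omega$.
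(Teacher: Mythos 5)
The paper gives no proof of this proposition at all---it is explicitly omitted, with the results deferred to the cited literature (Feichtinger, Feichtinger--Gr{\"o}chenig, Gr{\"o}chenig's book, Toft)---and your sketch is a correct outline of precisely the standard arguments found in those references: the change-of-window estimate plus weighted Young's inequality for (1), the amalgam-space self-improvement of the STFT (equivalently Gabor discretization) for the embeddings in (2), and Moyal's identity, mixed-norm H{\"o}lder and the adjoint-STFT synthesis operator $V_\gamma ^*$ for (3) and (4), transferred to the $W^{p,q}_{(\omega )}$ spaces exactly as you indicate. The only blemish is a harmless slip in (2), where the estimate should read $\nm {V_\gamma f}{L^{p_2,q_2}_{1,(\omega _2)}}\le C\nm {V_\gamma f}{L^{p_1,q_1}_{1,(\omega _1)}}$ (both norms of type $L^{p,q}_1$; mixing in the type-$2$ norm would require the extra conditions of Remark \ref{p1.7}{\,}(1)); with that correction your proposal matches the approach the paper implicitly relies on.
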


\par

Proposition \ref{p1.4}{\,}(1) allows us  be rather vague concerning
the choice of $\fy \in  M^1_{(v)}\setminus 0$ in
\eqref{modnorm} and \eqref{fourmodnorm}. For example, if $C>0$ is a
constant and $\mathscr A$ is a subset of $\mathscr S'$, then $\nm
a{W^{p,q}_{(\omega )}}\le C$ for
every $a\in \mathscr A$, means that the inequality holds for some choice
of $\fy \in  M^1_{(v)}\setminus 0$ and every $a\in
\mathscr A$. Evidently, a similar inequality is true for any other choice
of $\fy \in  M^1_{(v)}\setminus 0$, with  a suitable constant, larger
than $C$ if necessary.

\par

In the following remark we list some other properties for modulation
spaces. Here and in what follows we let
$\eabs x =(1+|x|^2)^{1/2}$, when $x\in \rr d$.

\par

\begin{rem}\label{p1.7}
Assume that $p,p_1,p_2,q,q_1,q_2\in [1,\infty ]$ are such that
$$
q_1\le\min (p,p'),\quad q_2\ge \max (p,p'),\quad  p_1\le\min
(q,q'),\quad p_2\ge \max (q,q'),
$$
and that $\omega, v \in
\mathscr P(\rr {2d})$ are such that $\omega$ is $v$-moderate. Then the
following is true:
\begin{enumerate}
\item[{\rm{(1)}}]  if $p\le q$, then $W^{p,q}_{(\omega )}(\rr
d)\subseteq M^{p,q}_{(\omega )}(\rr d)$, and if $p\ge q$, then
$M^{p,q}_{(\omega )}(\rr d)\subseteq W^{p,q}_{(\omega )}(\rr
d)$. Furthermore, if $\omega (x,\xi )=\omega (x)$, then
$$
M^{p,q_1}_{(\omega )}(\rr d)\subseteq W^{p,q_1}_{(\omega )}(\rr d)
\subseteq L^p_{(\omega )}(\rr d)\subseteq W^{p,q_2}_{(\omega )}(\rr
d)\subseteq M^{p,q_2}_{(\omega )}(\rr d).
$$
In particular, $M^2_{(\omega )}=W^2_{(\omega )}=L^2_{(\omega )}$. If
instead $\omega (x,\xi )=\omega (\xi )$, then
$$
W^{p_1,q}_{(\omega )}(\rr d)\subseteq M^{p,q_1}_{(\omega )}(\rr d)
\subseteq \mathscr FL^q_{(\omega )}(\rr d)\subseteq M^{p_2,q}_{(\omega
)}(\rr d)\subseteq W^{p_2,q}_{(\omega )}(\rr d).
$$
Here $\mathscr FL^q_{(\omega _0)}(\rr d)$ consists of all $f\in
\mathscr S'(\rr d)$ such that
$$
\nm {\widehat f \, \omega _0}{L^q}<\infty ;
$$

\vrum

\item[{\rm{(2)}}] if $\omega (x,\xi )=\omega (x)$, then the following
conditions are equivalent:
\begin{itemize}
\item $M^{p,q}_{(\omega )}(\rr d)\subseteq C(\rr d)$;

\vrum

\item $W^{p,q}_{(\omega )}(\rr d)\subseteq C(\rr d)$;

\vrum

\item $q=1$.
\end{itemize}

\vrum

\item[{\rm{(3)}}] $M^{1,\infty}(\rr d)$ and $W^{1,\infty}(\rr d)$ are
convolution algebras. If $C_B'(\rr d)$ is the set of all measures on
$\rr d$ with bounded mass, then
$$
W^{1,\infty}(\rr d)\subseteq C_B'(\rr d)\subseteq M^{1,\infty}(\rr
d)\text ;
$$

\vrum

\item[{\rm{(4)}}] if $x_0\in \rr d$ is fixed and $\omega _0(\xi
)=\omega (x_0,\xi )$, then
$$
M^{p,q}_{(\omega )}\cap \mathscr E' =W^{p,q}_{(\omega )}\cap \mathscr
E' =\mathscr  FL^q_{(\omega _0)}\cap \mathscr E' \text ;
$$

\vrum

\item[{\rm{(5)}}] if $\omega (x,\xi )=\omega _0(\xi ,-x)$, then the
Fourier transform on $\mathscr S'(\rr d)$ restricts to a homeomorphism
from $M^{p}_{(\omega )}(\rr d)$ to $M^{p}_{(\omega _0 )}(\rr d)$. In
particular, if $\omega =\omega _0$, then $M^p_{(\omega )}$ is
invariant under the Fourier transform. Similar facts hold for partial
Fourier transforms;

\vrum

\item[{\rm{(6)}}] for each $x,\xi\in \rr d$ we have
\begin{align*}
\nm {e^{i\scal\cdo \xi}f(\cdo -x)}{M^{p,q}_{(\omega)}}\le C
v(x,\xi) \nm f{M^{p,q}_{(\omega)}},
\intertext{and}
\nm {e^{i\scal\cdo \xi}f(\cdo -x)}{W^{p,q}_{(\omega)}}\le C v(x,\xi)
\nm f{W^{p,q}_{(\omega)}}
\end{align*}
for some constant $C$ which is independent of $f\in \mathscr S'(\rr
d)$;

\vrum

\item[{\rm{(7)}}] if $\tilde {\omega}(x,\xi)=\omega(x,-\xi)$ then
$f\in M^{p,q}_{(\omega)}$ if and only if $\overline f\in
M^{p,q}_{(\tilde {\omega})}$;

\vrum

\item[{\rm{(8)}}] if $s\in \mathbf R$ and $\omega (x,\xi )=\eabs \xi
^s$, then $M^2_{(\omega )}=W^2_{(\omega )}$ agrees with $H^2_s$, the
Sobolev space of distributions with $s$ derivatives in $L^2$. That is,
$H^2_s$ consists of all $f\in \mathscr S'$ such that $\mathscr
F^{-1}(\eabs \cdo ^s\widehat f)\in L^2$.
\end{enumerate}
(See e.{\,}g. \cite{Fei2,Fei3,FG1,FG2,FG3, Groc2,To4,
To5, To6, To7}.)
\end{rem}

\par

\medspace

We also need some facts in Section 2 in \cite{To7} on narrow
convergence. For any $f\in \mathscr S'(\rr d)$, $\omega \in \mathscr
P(\rr {2d})$, $\fy \in \mathscr S(\rr d)$ and $p\in [1,\infty ]$, we
set
$$
H_{f,\omega ,p}(\xi )=\Big ( \int _{\rr d}\abp {V_\fy f(x,\xi )\omega
(x,\xi )}^p\, dx\Big )^{1/p}.
$$

\par

\begin{defn}\label{p2.1}
Assume that $f,f_j\in M^{p,q}_{(\omega )}(\rr {d} )$,
$j=1,2,\dots \ $. Then $f_j$ is said to converge {\it narrowly} to $f$
(with respect to $p,q\in [1,\infty ]$, $\fy \in \mathscr S(\rr
d)\setminus 0$ and $\omega \in \mathscr P(\rr {2d})$), if the following
conditions are satisfied:
\begin{enumerate}
\item $f_j\to f$ in $\mathscr S'(\rr {d} )$ as $j$ turns to $\infty$;

\par

\item $H_{f_j,\omega ,p}(\xi )\to H_{f,\omega ,p}(\xi )$ in $L^q(\rr
d)$ as $j$ turns to $\infty$.
\end{enumerate}
\end{defn}

\par

\begin{rem}\label{p2.2}
Assume that $f,f_1,f_2,\dots \in \mathscr S'(\rr
d)$ satisfies (1) in Definition \ref{p2.1}, and assume that $\xi \in
\rr d$. Then it follows from Fatou's lemma that
$$
\liminf _{j\to \infty}H_{f_j,\omega ,p}(\xi )\ge H_{f,\omega ,p}(\xi
)\quad \text{and}\quad \liminf _{j\to \infty}\nm
{f_j}{M^{p,q}_{(\omega )}}\ge \nm {f}{M^{p,q}_{(\omega )}}.
$$
\end{rem}

\par

The following proposition is important to us later on. We omit the
proof since the result is a restatement of Proposition 2.3 in \cite
{To7}.

\par

\begin{prop}\label{p2.3}
Assume that $p,q\in [1,\infty ]$ with  $q<\infty$ and that $\omega \in
\mathscr P(\rr {2d})$. Then $C_0^\infty (\rr d)$ is dense in
$M^{p,q}_{(\omega )}(\rr d)$ with respect to the narrow convergence.
\end{prop}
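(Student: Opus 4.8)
The plan is to exhibit, for a given $f\in M^{p,q}_{(\omega )}(\rr d)$ with $q<\infty$, a sequence $f_n\in C_0^\infty (\rr d)$ converging to $f$ narrowly, and to reduce the verification of the two conditions in Definition \ref{p2.1} to a single numerical convergence. Fix a mollifier $\psi \in C_0^\infty (\rr d)$ with $\psi \ge 0$ and $\int \psi =1$, set $\psi _n(x)=n^d\psi (nx)$, fix a cut-off $\chi \in C_0^\infty (\rr d)$ with $0\le \chi \le 1$ and $\chi =1$ near the origin, and put $f_n=\chi (\cdot /n)\,(f*\psi _n)$. Then each $f_n$ lies in $C_0^\infty (\rr d)$ and $f_n\to f$ in $\mathscr S'(\rr d)$, which is condition (1). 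Since $\fy $ is fixed and the test functions $\eta \mapsto e^{i\scal \eta \xi}\overline{\fy (\eta -x)}$ lie in $\mathscr S(\rr d)$, the weak-$*$ convergence $f_n\to f$ in $\mathscr S'$ gives the pointwise convergence $V_\fy f_n(x,\xi )\to V_\fy f(x,\xi )$ for every $(x,\xi )$, and hence, by Fatou's lemma in the form of Remark \ref{p2.2}, both $\liminf _nH_{f_n,\omega ,p}(\xi )\ge H_{f,\omega ,p}(\xi )$ for a.e.\ $\xi $ and $\liminf _n\nm {f_n}{M^{p,q}_{(\omega )}}\ge \nm f{M^{p,q}_{(\omega )}}$.

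The engine of the argument is the following elementary fact, which is exactly where the hypothesis $q<\infty$ is used. If $g_n,g\ge 0$ are measurable, $g\in L^q(\rr d)$ with $1\le q<\infty$, $\liminf _ng_n\ge g$ almost everywhere, and $\nm {g_n}{L^q}\to \nm g{L^q}$, then $g_n\to g$ in $L^q(\rr d)$. This is proved by setting $h_n=\min (g_n,g)$; then $h_n\to g$ a.e.\ and $0\le h_n\le g$, so $h_n\to g$ in $L^q$ by dominated convergence, while the superadditivity $s^q+t^q\le (s+t)^q$ for $s,t\ge 0$, $q\ge 1$, applied to $g_n=h_n+(g_n-g)^+$ gives $\nm {(g_n-g)^+}{L^q}^q\le \nm {g_n}{L^q}^q-\nm {h_n}{L^q}^q\to 0$; together with $(g_n-g)^-=g-h_n\to 0$ in $L^q$ this yields the claim. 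Applying this with $g_n=H_{f_n,\omega ,p}$ and $g=H_{f,\omega ,p}$, and recalling that $\nm f{M^{p,q}_{(\omega )}}=\nm {H_{f,\omega ,p}}{L^q}$ by definition, we see that condition (2) of Definition \ref{p2.1} follows \emph{as soon as} the numerical convergence $\nm {f_n}{M^{p,q}_{(\omega )}}\to \nm f{M^{p,q}_{(\omega )}}$ is established. It is worth stressing that no assumption on $p$ enters here: the inner $L^p_x$-norm defining $H_{f_n,\omega ,p}$ is never required to converge pointwise, which is precisely why the statement tolerates $p=\infty$.

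By the Fatou bound already noted it remains only to prove the reverse estimate $\limsup _n\nm {f_n}{M^{p,q}_{(\omega )}}\le \nm f{M^{p,q}_{(\omega )}}$, and this is the step I expect to be the main obstacle. The natural route is to control the two operations defining $f_n$ uniformly on $M^{p,q}_{(\omega )}(\rr d)$: convolution against the probability density $\psi _n$ and multiplication by the cut-off $\chi (\cdot /n)$. Both are bounded on $M^{p,q}_{(\omega )}$, the boundedness being quantified through the $v$-moderateness of $\omega $ together with the translation--modulation estimate of Remark \ref{p1.7}(6); the point to be extracted is that, as $n\to \infty$, these operators act as \emph{asymptotic contractions}, so that the upper bound degrades only by a factor tending to $1$. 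Concretely, the mollification constant tends to $1$ because $\psi _n$ is an approximate identity with unit mass, while for the truncation one must show that the mass of $f$ escaping past $|x|\sim n$ does not inflate the norm in the limit---this localization estimate, rather than any smoothing, is the delicate ingredient, and carrying it out carefully (using that $\omega $ is $v$-moderate and the equivalence of windows from Proposition \ref{p1.4}(1)) is the heart of the proof. Once $\nm {f_n}{M^{p,q}_{(\omega )}}\to \nm f{M^{p,q}_{(\omega )}}$ is secured, the lemma above delivers condition (2), and with condition (1) this shows $f_n\to f$ narrowly, as required.
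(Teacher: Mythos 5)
Your reduction step is correct and even elegant: the elementary lemma (if $g_n,g\ge 0$, $g\in L^q$ with $1\le q<\infty$, $\liminf_n g_n\ge g$ a.e.\ and $\|g_n\|_{L^q}\to \|g\|_{L^q}$, then $g_n\to g$ in $L^q$) is valid exactly as you prove it, and together with Remark \ref{p2.2} it does reduce narrow convergence of $f_n=\chi (\cdot /n)(f*\psi _n)$ to the single scalar claim $\|f_n\|_{M^{p,q}_{(\omega )}}\to \|f\|_{M^{p,q}_{(\omega )}}$; condition (1) of Definition \ref{p2.1} and the pointwise convergence of the STFT are also fine. But the proof stops precisely where the proposition lives: the bound $\limsup _n\|f_n\|_{M^{p,q}_{(\omega )}}\le \|f\|_{M^{p,q}_{(\omega )}}$ is never established --- you yourself call it ``the heart of the proof'' and offer only the heuristic that mollification and truncation are asymptotic contractions. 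Note that when $p,q<\infty$ the proposition is immediate from Proposition \ref{p1.4}{\,}(4), since $|H_{f_n,\omega ,p}(\xi )-H_{f,\omega ,p}(\xi )|\le H_{f_n-f,\omega ,p}(\xi )$ shows norm convergence implies narrow convergence; so the only case with genuine content is $p=\infty$, $q<\infty$, and that is exactly the case your missing step has to handle.

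Moreover, the heuristic does not survive the natural estimates, so this is not a routine verification left out for brevity. Writing $V_\varphi (\chi (\cdot /n)f)(x,\xi )=(2\pi )^{-d/2}\bigl(\widehat{\chi (\cdot /n)}*V_\varphi f(x,\cdot )\bigr)(\xi )$, the relevant multiplier constant is $\|\widehat{\chi (\cdot /n)}\|_{L^1}=\|\widehat{\chi}\|_{L^1}$, which is scale invariant and strictly larger than $(2\pi )^{d/2}$ (a nonconstant $\chi \in C_0^\infty$ equal to $1$ near the origin cannot have nonnegative Fourier transform), so the operator-norm bound for the truncation does not tend to $1$ as $n\to \infty$. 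For the mollification, the weighted estimate carries the moderateness constant $C$ and the value $v(0,0)$ from $\omega (x,\xi )\le C\omega (x-z,\xi )v(z,0)$, neither of which equals $1$ in general; replacing $\omega$ by an equivalent smooth, slowly varying weight repairs the constants but changes the norm, and the statement $\|f_n\|\to \|f\|$ --- unlike boundedness --- is not stable under passage to an equivalent norm. What is actually needed is a pointwise statement $H_{f_n,\omega ,p}(\xi )\to H_{f,\omega ,p}(\xi )$ combined with a generalized dominated convergence argument in $L^q$, and proving that pointwise convergence when $p=\infty$ (a supremum in $x$, with no uniform-in-$x$ modulus of continuity in $\xi$ available) is itself the nontrivial part; once one has those tools, condition (2) follows directly and your lemma becomes superfluous. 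The paper itself gives no internal proof to compare against --- it cites Proposition 2.3 of \cite{To7} --- but as written your argument does not prove the proposition.
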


\medspace

Next we recall some facts in Chapter XVIII in \cite{Ho1} concerning
pseudo-differential operators. Assume that $a\in \mathscr 
S(\rr {2d})$, and that $t\in \mathbf R$ is fixed. Then the
pseudo-differential operator $a_t(x,D)$ in
\begin{equation}\label{e0.5}
\begin{aligned}
(a_t(x,D)f)(x &)
=
(\op _t(a)f)(x)
\\
=
(2\pi  &) ^{-d}\iint a((1-t)x+ty,\xi )f(y)e^{i\scal {x-y}\xi }\,
dyd\xi .
\end{aligned}
\end{equation}
is a linear and continuous operator on $\mathscr S(\rr d)$. For
general $a\in \mathscr S'(\rr {2d})$, the
pseudo-differential operator $a_t(x,D)$ is defined as the continuous
operator from $\mathscr S(\rr d)$ to $\mathscr S'(\rr d)$ with
distribution kernel
\begin{equation}\label{weylkernel}
K_{t,a}(x,y)=(2\pi )^{-n/2}(\mathscr F_2^{-1}a)((1-t)x+ty,y-x),
\end{equation}
where $\mathscr F_2F$ is the partial
Fourier transform of $F(x,y)\in \mathscr S'(\rr{2d})$ with respect to
the $y$-variable. This definition makes sense, since
the mappings $\mathscr F_2$ and $F(x,y)\mapsto F((1-t)x+ty,y-x)$ are
homeomorphisms on $\mathscr S'(\rr {2d})$. Moreover, it agrees with
the operator in \eqref{e0.5} when $a\in \mathscr S(\rr {2d})$.
If $t=0$, then $a_t(x,D)$ agrees with
the Kohn-Nirenberg representation $a(x,D)$. If instead $t=1/2$, then
$a_t(x,D)$ is the Weyl operator $a^w(x,D)$ of $a$, and if $a,b\in
\mathscr S(\rr {2d})$, then the Weyl product $a\wpr b$ between $a$
and $b$ is the function which fulfills $(a\wpr b)^w(x,D) =
a^w(x,D)b^w(x,D)$.

\medspace

Next we recall the definition of symplectic Fourier transform,
twisted convolution and related objects. The even-dimensional vector
space $\rr {2d}$ is a (real) symplectic vector space with the
(standard) symplectic form
$$
\sigma(X,Y) = \sigma\big( (x,\xi ); (y,\eta ) \big) = \scal y \xi -
\scal x \eta
$$
where $\langle \cdot,\cdot \rangle$ denotes the usual scalar product
on $\rr d$.

\par

The symplectic Fourier transform for $a \in \mathscr{S}(\rr {2d})$
is defined by the formula
\begin{equation*}
(\mathscr{F}_\sigma a) (X)
= \widehat{a}(X)
= \pi^{-d}\int a(Y) e^{2 i \sigma(X,Y)}\,  dY.
\end{equation*}
Then $\mathscr{F}_\sigma^{-1} = {\mathscr{F}_\sigma}$ is continuous on
$\mathscr{S}(\rr {2d})$, and extends as
usual to a homeomorphism on $\mathscr{S}'(\rr {2d})$, and to a
unitary map on $L^2(\rr {2d})$. The symplectic short-time Fourier
transform of $a \in \mathscr{S}'(\rr {2d})$ with respect to the
window function $\fy \in \mathscr{S'}(\rr {2d})$ is defined
by
\begin{equation}\nonumber
\mathcal V_{\fy} a(X,Y) = \mathscr{F}_\sigma \big( a\, \fy (\cdo -X)
\big) (Y), \ X,Y \in \rr {2d}.
\end{equation}

\par

Assume that $\omega \in \mathscr P(\rr {4d})$. Then we let
$\mathcal M^{p,q}_{(\omega )}(\rr {2d})$ and $\mathcal
W^{p,q}_{(\omega )}(\rr {2d})$ denote the modulation spaces, where
the symplectic short-time Fourier
transform is used instead of the usual short-time Fourier transform in
the definitions of the norms. It follows that any property valid for
$M^{p,q}_{(\omega )}(\rr {2d})$ or $W^{p,q}_{(\omega )}(\rr {2d})$
carry over to $\mathcal M^{p,q}_{(\omega )}(\rr {2d})$ and $\mathcal
W^{p,q}_{(\omega )}(\rr {2d})$ respectively. For example, for the
symplectic short-time Fourier transform we have
\begin{equation}\label{stftsymplfour}
\mathcal V_{\mathscr F_\sigma \fy}(\mathscr F_\sigma a)(X,Y) =
e^{2i\sigma (Y,X)}\mathcal V_\fy a(Y,X),
\end{equation}
which implies that
\begin{equation}\label{twistfourmod}
\mathscr F_\sigma \mathcal M^{p,q}_{(\omega )}(\rr {2d}) = \mathcal
W^{q,p}_{(\omega _0)}(\rr {2d}), \qquad \omega _0(X,Y)=\omega (Y,X).
\end{equation}

\par

Assume that $a,b\in \mathscr S(\rr {2d})$. Then the twisted
convolution of $a$ and $b$ is defined by the formula
\begin{equation}\label{twist1}
(a \ast _\sigma b) (X)
= (2/\pi)^{d/2} \int a(X-Y) b(Y) e^{2 i \sigma(X,Y)}\, dY.
\end{equation}
The definition of $*_\sigma$ extends in different ways. For example,
it extends to a continuous multiplication on $L^p(\rr {2d})$ when $p\in
[1,2]$, and to a continuous map from $\mathscr S'(\rr {2d})\times
\mathscr S(\rr {2d})$ to $\mathscr S'(\rr {2d})$. If $a,b \in
\mathscr{S}'(\rr {2d})$, then $a \wpr b$ makes sense if and only if $a
*_\sigma \widehat b$ makes sense, and then
\begin{equation}\label{tvist1}
a \wpr b = (2\pi)^{-d/2} a \ast_\sigma (\mathscr F_\sigma {b}).
\end{equation}
We also remark that for the twisted convolution we have
\begin{equation}\label{weylfourier1}
\mathscr F_\sigma (a *_\sigma b) = (\mathscr F_\sigma a) *_\sigma b =
\check{a} *_\sigma (\mathscr F_\sigma b),
\end{equation}
where $\check{a}(X)=a(-X)$ (cf. \cite{To1,To2,To3}). A
combination of \eqref{tvist1} and \eqref{weylfourier1} give
\begin{equation}\label{weyltwist2}
\mathscr F_\sigma (a\wpr b) = (2\pi )^{-d/2}(\mathscr F_\sigma
a)*_\sigma (\mathscr F_\sigma b).
\end{equation}

\par

\section{Twisted convolution on modulation spaces and Lebesgue
spaces}\label{sec2}

\par

In this section we discuss algebraic properties of the twisted
convolution when acting on modulation spaces of the form
$W^{p,q}_{(\omega )}$. The most general result is equivalent to
Theorem 0.3$'$ in \cite{HTW}, which concerns continuity for
the Weyl product on modulation spaces. Thereafter we use this
result to establish continuity properties for the twisted convolution
when acting on weighted Lebesgue spaces.

\par

The following lemma is important in our investigations. The
proof is omitted since the result is an immediate consequence of Lemma
4.4 in \cite{To2} and its proof, \eqref{stftsymplfour},
\eqref{tvist1} and \eqref{weylfourier1}.

\par

\begin{lemma}\label{cornerstone}
Assume that $a_1\in \mathscr S'(\rr {2d})$, $a_2 \in \mathscr
S(\rr {2d})$ and $\fy _1,\fy _2 \in \mathscr S(\rr {2d})$. Then the
following is true:
\begin{enumerate}
\item if $\fy = \pi^d \fy _1 \wpr \fy _2$, then $\fy \in \mathscr
S(\rr {2d})$, the map
$$
Z\mapsto e^{2 i \sigma(Z,Y)} (\mathcal V_{\chi_1} a_1) (X-Y+Z,Z) \,
(\mathcal V_{\chi_2} a_2)(X+Z,Y-Z)
$$
belongs to $L^1(\rr {2d})$, and
\begin{multline}\label{weylp1}
\mathcal V_{\fy } (a_1 \wpr a_2) (X,Y)
\\[1ex]
=
\int  e^{2 i \sigma(Z,Y)} (\mathcal V_{\chi_1} a_1)(X-Y+Z,Z) \,
(\mathcal V_{\chi_2}a_2)
(X+Z,Y-Z) \, dZ\text ;
\end{multline}

\vrum

\item if $\fy = 2^{-d} \fy _1 *_\sigma \fy _2$, then $\fy \in \mathscr
S(\rr {2d})$, the map
$$
Z\mapsto e^{2 i \sigma(X,Z-Y)} (\mathcal V_{\chi_1} a_1) (X-Y+Z,Z) \,
(\mathcal V_{\chi_2} a_2)(Y-Z,X+Z)
$$
belongs to $L^1(\rr {2d})$, and
\begin{multline}\label{twistp1}
\mathcal V_{\fy } (a_1 *_\sigma a_2) (X,Y)
\\[1ex]
=
\int  e^{2 i \sigma(X,Z-Y)} (\mathcal V_{\chi_1} a_1)(X-Y+Z,Z) \,
(\mathcal V_{\chi_2}a_2)(Y-Z,X+Z) \, dZ
\end{multline}
\end{enumerate}
\end{lemma}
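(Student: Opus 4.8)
The plan is to prove the two identities in tandem, exploiting the symplectic--Fourier dictionary between the Weyl product $\wpr$ and the twisted convolution $*_\sigma$: I would establish the Weyl identity \eqref{weylp1} directly and then transport it to the twisted identity \eqref{twistp1} (the reverse reduction is identical). Identity \eqref{weylp1} is the computation behind Lemma~4.4 in \cite{To2}. To prove it one may, by weak-$*$ density of $\mathscr S(\rr{2d})$ in $\mathscr S'(\rr{2d})$ and continuity of both sides in $a_1$, first assume $a_1\in\mathscr S(\rr{2d})$; then one expands $\mathcal V_\fy(a_1\wpr a_2)(X,Y)=\mathscr F_\sigma\big((a_1\wpr a_2)\,\fy(\cdo-X)\big)(Y)$ using the integral representation of $\wpr$ and the covariance of $\mathcal V$ under phase-space shifts, and changes variables so that the two window contributions separate. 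The normalisation $\fy=\pi^d\fy_1\wpr\fy_2$ is precisely what collapses the double-window term into the product $(\mathcal V_{\chi_1}a_1)(\mathcal V_{\chi_2}a_2)$ of single-window transforms (the $\chi_j$ in \eqref{weylp1}--\eqref{twistp1} being the same $\fy_j$ that build $\fy$), and $\fy\in\mathscr S$ because $\wpr$ maps $\mathscr S\times\mathscr S$ into $\mathscr S$, cf.\ \eqref{weylmap}.

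\textbf{From \eqref{weylp1} to \eqref{twistp1}.} Since $\mathscr F_\sigma^{-1}=\mathscr F_\sigma$, relation \eqref{tvist1} applied with $b=\mathscr F_\sigma a_2$ gives $a_1*_\sigma a_2=(2\pi)^{d/2}\,a_1\wpr(\mathscr F_\sigma a_2)$, and $\mathscr F_\sigma a_2\in\mathscr S(\rr{2d})$, so \eqref{weylp1} applies with $a_2$ replaced by $\mathscr F_\sigma a_2$ and second window $\mathscr F_\sigma\chi_2$. To match the prescribed window $\psi:=2^{-d}\fy_1*_\sigma\fy_2$ I use \eqref{tvist1} once more: $\pi^d\fy_1\wpr(\mathscr F_\sigma\fy_2)=(\pi/2)^{d/2}\fy_1*_\sigma\fy_2$, hence $\psi=(2\pi)^{-d/2}\,\pi^d\fy_1\wpr(\mathscr F_\sigma\fy_2)$. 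As $\mathcal V_\fy a$ is linear in $\fy$, the outer constant $(2\pi)^{d/2}$ cancels this $(2\pi)^{-d/2}$, leaving
\[
\mathcal V_{\psi}(a_1*_\sigma a_2)(X,Y)=\int e^{2i\sigma(Z,Y)}(\mathcal V_{\chi_1}a_1)(X-Y+Z,Z)\,\big(\mathcal V_{\mathscr F_\sigma\chi_2}(\mathscr F_\sigma a_2)\big)(X+Z,Y-Z)\,dZ .
\]

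\textbf{Converting the second factor.} Now \eqref{stftsymplfour}, read as $\mathcal V_{\mathscr F_\sigma\chi_2}(\mathscr F_\sigma a_2)(U,V)=e^{2i\sigma(V,U)}\mathcal V_{\chi_2}a_2(V,U)$ and applied at $U=X+Z$, $V=Y-Z$, produces exactly the argument swap $(X+Z,Y-Z)\mapsto(Y-Z,X+Z)$ of \eqref{twistp1} together with a phase $e^{2i\sigma(Y-Z,X+Z)}$. A skew-symmetry computation collapses the two phases, $e^{2i\sigma(Z,Y)}e^{2i\sigma(Y-Z,X+Z)}=e^{2i\sigma(X,Z-Y)}$, which is the phase in \eqref{twistp1}, and the integral becomes the right-hand side of \eqref{twistp1}. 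For the $L^1$ claims in both parts: for $a_1\in\mathscr S'$ the transform $\mathcal V_{\chi_1}a_1$ is continuous of at most polynomial growth, while for $a_2\in\mathscr S$ one has $\mathcal V_{\chi_2}a_2\in\mathscr S(\rr{4d})$; hence the $Z$-integrand is (polynomially bounded) $\times$ (rapidly decreasing), so integrable, the unimodular phases playing no role.

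\textbf{Main obstacle.} The genuinely hard part is \eqref{weylp1} itself, i.e.\ carrying out the change of variables in the symplectic STFT of a Weyl product so that \emph{both} the shift pattern and the unimodular phase $e^{2i\sigma(Z,Y)}$ come out correctly \emph{and} the window normalisation is exactly $\pi^d\fy_1\wpr\fy_2$ with overall constant $1$; this delicate bookkeeping is what \cite{To2} supplies. Granting it, the passage to \eqref{twistp1} is the routine dictionary computation above, where the only points needing care are the cancellation of the factors $(2\pi)^{\pm d/2}$ and the skew-symmetry identity for the phases.
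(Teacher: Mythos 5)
Your proposal is correct and follows essentially the same route as the paper, which omits the proof precisely because part (1) is Lemma 4.4 in \cite{To2} and part (2) then follows from \eqref{stftsymplfour} and \eqref{tvist1} (together with $\mathscr F_\sigma ^{-1}=\mathscr F_\sigma$, which is how \eqref{weylfourier1} enters) --- exactly the dictionary computation you carry out. Your bookkeeping is sound: the window identity $2^{-d}\fy _1*_\sigma \fy _2=(2\pi )^{-d/2}\,\pi ^d\fy _1\wpr (\mathscr F_\sigma \fy _2)$, the cancellation of the factors $(2\pi )^{\pm d/2}$, and the phase collapse $e^{2i\sigma (Z,Y)}e^{2i\sigma (Y-Z,X+Z)}=e^{2i\sigma (X,Z-Y)}$ all check out.
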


\par

The first part of the latter result is used in \cite{HTW} to prove the
following result, which is essentially a restatement of Theorem 0.3$'$
in \cite{HTW}. Here we assume that the involved weight functions
satisfy
\begin{equation}\label{vikt1}
\omega_0(X,Y) \leq C \omega_1(X-Y+Z,Z)
\omega_2(X+Z,Y-Z),\quad \! \! X,Y,Z\in \rr {2d}.
\end{equation}
for some constant $C>0$, and that $p_j, q_j \in
[1,\infty]$ satisfy 
\begin{align}
\frac {1}{p_1}+\frac {1}{p_2}-\frac {1}{p_0} &= 1-\Big (\frac
{1}{q_1}+\frac {1}{q_2}-\frac {1}{q_0}\Big )\label{pqformulas1}
\intertext{and}
0\le \frac {1}{p_1}+\frac {1}{p_2}-\frac {1}{p_0}&\le \frac
{1}{p_j},\frac {1}{q_j}\le \frac {1}{q_1}+\frac {1}{q_2}-\frac
{1}{q_0} ,\quad j=0,1,2.\label{pqformulas2}
\end{align}

\par

\begin{thm}\label{algthm1}
Assume that $\omega _0,\omega _1,\omega _2\in
\mathscr P(\rr {4d})$ satisfy \eqref{vikt1}, and that $p_j, q_j \in
[1,\infty]$ for $j=0,1,2$, satisfy \eqref{pqformulas1} and
\eqref{pqformulas2}.
Then the map \eqref{weylmap} on  $\mathscr S(\rr{2d})$ extends
uniquely to a continuous map from $M_{(\omega _1)}^{p_1,q_1}(\rr
{2d}) \times M_{(\omega _2)}^{p_2,q_2}(\rr {2d})$ to $M_{(\omega _0)}
^{p_0,q_0}(\rr {2d})$, and for some constant $C>0$, the bound
\eqref{holderyoung3} holds for every $a_1\in M_{(\omega
_1)}^{p_1,q_1}(\rr {2d})$ and $a_2\in M_{(\omega _2)}^{p_2,q_2}(\rr
{2d})$.
\end{thm}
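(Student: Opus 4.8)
The plan is to reduce the assertion to an a priori bound on Schwartz symbols and then to deduce that bound from the integral representation in Lemma~\ref{cornerstone}\,(1) together with a mixed-norm H\"older--Young inequality. Throughout I work with the symplectic short-time Fourier transform $\mathcal V$, for which the representation \eqref{weylp1} and the weight hypothesis \eqref{vikt1} are tailored and which describes the modulation spaces in the statement up to equivalent norms. Fix $\fy_1,\fy_2\in\mathscr S(\rr{2d})\setminus 0$ and put $\fy=\pi^d\,\fy_1\wpr\fy_2$, which again lies in $\mathscr S(\rr{2d})$ by Lemma~\ref{cornerstone}\,(1). For $a_1,a_2\in\mathscr S(\rr{2d})$, formula \eqref{weylp1} then expresses $\mathcal V_\fy(a_1\wpr a_2)(X,Y)$ as the integral over $Z$ of a product of a translate of $\mathcal V_{\fy_1}a_1$ and a translate of $\mathcal V_{\fy_2}a_2$.

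Next I would bring in the weights. Taking absolute values inside \eqref{weylp1} and applying \eqref{vikt1} pointwise, with $G_j=|\mathcal V_{\fy_j}a_j|\,\omega_j$, the whole matter is reduced to the scalar estimate
\begin{multline*}
\Big\Vert \int G_1(X-Y+Z,Z)\, G_2(X+Z,Y-Z)\, dZ \Big\Vert _{L^{p_0,q_0}_1} \\
\le C\, \nm{G_1}{L^{p_1,q_1}_1}\, \nm{G_2}{L^{p_2,q_2}_1},
\end{multline*}
because its left-hand side dominates $\nm{a_1\wpr a_2}{M^{p_0,q_0}_{(\omega_0)}}$, while the two factors on the right equal $\nm{a_1}{M^{p_1,q_1}_{(\omega_1)}}$ and $\nm{a_2}{M^{p_2,q_2}_{(\omega_2)}}$.

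The heart of the proof, and the step I expect to be the main obstacle, is this scalar mixed-norm inequality over the whole admissible region. Writing $\theta=1/p_1+1/p_2-1/p_0$, condition \eqref{pqformulas1} forces $1/q_1+1/q_2-1/q_0=1-\theta$, while \eqref{pqformulas2} confines every $1/p_j$ and $1/q_j$ to the interval $[\theta,1-\theta]$ with $\theta\in[0,1/2]$; this convex polytope is precisely the set of exponents for which the bilinear integral operator above maps $L^{p_1,q_1}_1\times L^{p_2,q_2}_1$ boundedly into $L^{p_0,q_0}_1$. I would first verify boundedness at the vertices of the polytope, where the constraints \eqref{pqformulas2} are saturated and the estimate decouples into Young's inequality in the integration variable $Z$ followed by H\"older's inequality in the surviving variables, and then reach the interior by multilinear complex interpolation between these endpoint bounds. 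The delicate point is the bookkeeping: the position and frequency slots of $G_1$ and $G_2$ are interchanged and sheared by the arguments $X-Y+Z$, $Z$, $X+Z$ and $Y-Z$, so one must arrange the substitutions at each vertex so that the iterated (position, frequency) norms match the outer $L^{p_0,q_0}_1$ norm in the correct order.

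Finally I would upgrade the a priori estimate to the stated map. When all of $p_1,q_1,p_2,q_2$ are finite, $\mathscr S(\rr{2d})$ is dense in the domain spaces by Proposition~\ref{p1.4}\,(2),(4), so the bilinear map on $\mathscr S\times\mathscr S$ extends uniquely and continuously and \eqref{holderyoung3} passes to the limit. The cases in which some exponent equals $\infty$ are handled by duality: one defines the extension through the $L^2$ form $(\cdo,\cdo)$ and controls it with the norm characterization in Proposition~\ref{p1.4}\,(3), uniqueness again following from the density already used. This completes the plan.
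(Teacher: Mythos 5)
Note first that the paper contains no proof of Theorem \ref{algthm1} at all: it is imported as ``essentially a restatement of Theorem 0.3$'$ in \cite{HTW}'', the paper only recording that Lemma \ref{cornerstone}{\,}(1) is the ingredient used there. Your opening moves do reproduce the first step of that argument correctly: with $\fy =\pi ^d\fy _1\wpr \fy _2$ (you should also check that $\fy \neq 0$ for suitable $\fy _1,\fy _2$, e.g. Gaussians, since otherwise $\mathcal V_\fy$ does not give an equivalent norm), taking absolute values in \eqref{weylp1} and inserting \eqref{vikt1} reduces \eqref{holderyoung3} to the scalar mixed-norm inequality you display for $G_j=|\mathcal V_{\fy _j}a_j|\, \omega _j$.

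The genuine gap is that this scalar inequality --- which you yourself call the heart of the proof --- is asserted rather than proved, and it is essentially the entire content of the theorem. The claim that at each vertex of the exponent polytope the bound ``decouples into Young's inequality in $Z$ followed by H{\"o}lder's inequality in the surviving variables'' is unsubstantiated, and at the central vertex $p_j=q_j=2$ (an extreme point of the region, which any interpolation scheme must therefore use) it misdescribes what is needed: there Cauchy--Schwarz in $Z$ gives the pointwise bound by $\bigl(\int |G_1(X-Y+Z,Z)|^2\, dZ\bigr)^{1/2}\bigl(\int |G_2(X+Z,Y-Z)|^2\, dZ\bigr)^{1/2}$, and the estimate closes not by H{\"o}lder but because the first factor depends only on $X-Y$ and the second only on $X+Y$, so that the change of variables $(X,Y)\mapsto (X-Y,X+Y)$ and Fubini finish the argument. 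This kernel-composition structure (unitarity of the Weyl transform in disguise) is precisely what makes the Weyl product better behaved than an ordinary convolution, and it is invisible in your recipe; a literal H{\"o}lder estimate in $(X,Y)$ applied to functions of $X-Y$ and $X+Y$ alone diverges. Beyond this single testable vertex, you neither enumerate the vertices of the five-dimensional polytope cut out by \eqref{pqformulas1}--\eqref{pqformulas2} nor verify the estimate at any of them, and that bookkeeping --- how the sheared arguments $X-Y+Z$, $Z$, $X+Z$, $Y-Z$ interact with the iterated $L^{p,q}_1$ norms --- is exactly what constitutes the proof of Theorem 0.3$'$ in \cite{HTW}. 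Finally, your extension step is incomplete precisely when some exponent is infinite: $\mathscr S(\rr {2d})$ is then not norm dense in the domain spaces, so uniqueness cannot ``follow from the density already used''; one needs weak-$*$ density (Proposition \ref{p1.4}{\,}(4)) or density with respect to narrow convergence (Proposition \ref{p2.3}), which is how the paper handles the analogous difficulty in the proof of Theorem \ref{Wpseudos}.
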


\par

The next result is an immediate consequence of  \eqref{twistfourmod},
\eqref{weyltwist2} and Theorem \ref{algthm1}. Here the condition
\eqref{vikt1} should be replaced by
\begin{equation}\label{vikt2}
\omega_0(X,Y) \leq C \omega_1(X-Y+Z,Z)
\omega_2(Y-Z,X+Z),\quad \! \! X,Y,Z\in \rr {2d}.
\end{equation}
and the condition \eqref{pqformulas2} should be replaced by
\begin{equation}\label{pqformulas3}
0\le \frac {1}{q_1}+\frac {1}{q_2}-\frac
{1}{q_0}\le \frac {1}{p_j},\frac {1}{q_j}\le \frac {1}{p_1}+\frac
{1}{p_2}-\frac {1}{p_0},\quad j=0,1,2.
\end{equation}

\par

\begin{thm}\label{algthm2}
Assume that $\omega _0,\omega _1,\omega _2\in
\mathscr P(\rr {4d})$ satisfy \eqref{vikt2}, and that $p_j, q_j \in
[1,\infty]$ for $j=0,1,2$, satisfy \eqref{pqformulas1} and
\eqref{pqformulas3}.
Then the map \eqref{twistmap} on $\mathscr S(\rr {2d})$ extends
uniquely to a continuous map from $W_{(\omega _1)}^{p_1,q_1}(\rr
{2d}) \times W_{(\omega _2)}^{p_2,q_2}(\rr {2d})$ to $W_{(\omega _0)}
^{p_0,q_0}(\rr {2d})$, and for some constant $C>0$, the bound
\eqref{holderyoung4} holds for every $a_1\in W_{(\omega
_1)}^{p_1,q_1}(\rr {2d})$ and $a_2\in W_{(\omega _2)}^{p_2,q_2}(\rr
{2d})$.
\end{thm}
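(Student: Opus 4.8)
The plan is to transport the statement to the situation of Theorem \ref{algthm1} by means of the symplectic Fourier transform, working throughout with the modulation spaces defined through the symplectic short-time Fourier transform (the spaces $\mathcal M^{p,q}_{(\omega)}$ and $\mathcal W^{p,q}_{(\omega)}$); recall that every property of $M^{p,q}_{(\omega)}$ carries over to $\mathcal M^{p,q}_{(\omega)}$, so that Theorem \ref{algthm1} applies verbatim to the latter. Since $\mathscr F_\sigma$ is an involutive homeomorphism on $\mathscr S'(\rr{2d})$ and on $\mathscr S(\rr{2d})$, I would set $A_j=\mathscr F_\sigma a_j$, so that $a_j=\mathscr F_\sigma A_j$. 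Then \eqref{weyltwist2} gives $a_1\asts a_2=(2\pi)^{d/2}\mathscr F_\sigma(A_1\wpr A_2)$, so $a_1\asts a_2\in\mathcal W^{p_0,q_0}_{(\omega_0)}$ exactly when $A_1\wpr A_2\in\mathcal M^{q_0,p_0}_{(\omega_0^{\ast})}$, where $\omega^{\ast}(X,Y)=\omega(Y,X)$, with equivalent norms by \eqref{twistfourmod} read through $\mathscr F_\sigma^{-1}=\mathscr F_\sigma$ (which interchanges the two Lebesgue exponents and swaps the two $\rr{2d}$-arguments of the weight). In the same way \eqref{twistfourmod} shows $a_j\in\mathcal W^{p_j,q_j}_{(\omega_j)}$ iff $A_j\in\mathcal M^{q_j,p_j}_{(\omega_j^{\ast})}$ with equivalent norms. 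Hence the asserted estimate \eqref{holderyoung4} is equivalent to the continuity of the Weyl product $\wpr$ from $\mathcal M^{q_1,p_1}_{(\omega_1^{\ast})}\times\mathcal M^{q_2,p_2}_{(\omega_2^{\ast})}$ into $\mathcal M^{q_0,p_0}_{(\omega_0^{\ast})}$, which is precisely what Theorem \ref{algthm1} delivers.

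What then remains is to check that the hypotheses of Theorem \ref{algthm1} hold for the transported data $(\omega_j^{\ast},q_j,p_j)$. For the exponents this is pure bookkeeping: \eqref{pqformulas1} is symmetric under the interchange $p_j\leftrightarrow q_j$, since both sides together assert $(1/p_1+1/p_2-1/p_0)+(1/q_1+1/q_2-1/q_0)=1$, so it is preserved; and substituting $(p_j,q_j)$ by $(q_j,p_j)$ in \eqref{pqformulas2} turns it into exactly \eqref{pqformulas3}, our standing hypothesis. For the weights I must verify that \eqref{vikt2} for the $\omega_j$ coincides with \eqref{vikt1} for the $\omega_j^{\ast}$. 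Writing \eqref{vikt1} for the $\omega_j^{\ast}$ and inserting $\omega_j^{\ast}(X,Y)=\omega_j(Y,X)$ yields
\[
\omega_0(Y,X)\le C\,\omega_1(Z,X-Y+Z)\,\omega_2(Y-Z,X+Z),\qquad X,Y,Z\in\rr{2d}.
\]
Renaming $X\leftrightarrow Y$ and performing the bijective, measure-preserving substitution $Z\mapsto X-Y+Z$ in the free variable turns the two right-hand factors into $\omega_1(X-Y+Z,Z)$ and $\omega_2(Y-Z,X+Z)$, which is exactly \eqref{vikt2}.

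Finally, Theorem \ref{algthm1} supplies a unique continuous extension of $\wpr$ on the relevant $\mathcal M$-spaces, together with the bound \eqref{holderyoung3}; conjugating this extension by the homeomorphism $\mathscr F_\sigma$ (and the harmless constant $(2\pi)^{d/2}$) produces the desired continuous extension of \eqref{twistmap} satisfying \eqref{holderyoung4}. Uniqueness is inherited, because $\mathscr S(\rr{2d})$ is (weakly) dense in the modulation spaces and $\mathscr F_\sigma$ maps $\mathscr S(\rr{2d})$ onto itself. I expect the only genuinely delicate point to be the weight computation of the second paragraph: one has to keep careful track of how the argument-swap induced by $\mathscr F_\sigma$ interacts with the shear $Z\mapsto X-Y+Z$, since it is exactly this interplay that converts \eqref{vikt1} into \eqref{vikt2} and \eqref{pqformulas2} into \eqref{pqformulas3}. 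Everything else is a formal transcription through the identities \eqref{twistfourmod} and \eqref{weyltwist2}.
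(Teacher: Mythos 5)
Your proof is correct and follows essentially the same route as the paper: the paper obtains Theorem \ref{algthm2} precisely as an immediate consequence of \eqref{twistfourmod}, \eqref{weyltwist2} and Theorem \ref{algthm1}, which is exactly the conjugation-by-$\mathscr F_\sigma$ argument you carry out. Your explicit verifications --- that \eqref{pqformulas1} is symmetric under $p_j\leftrightarrow q_j$, that \eqref{pqformulas2} turns into \eqref{pqformulas3} under this swap, and that \eqref{vikt1} for the swapped weights $\omega_j^{\ast}(X,Y)=\omega_j(Y,X)$ is equivalent to \eqref{vikt2} after the relabeling $X\leftrightarrow Y$ and the shear $Z\mapsto X-Y+Z$ --- are accurate and simply fill in the computations the paper leaves implicit.
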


\par

By using Theorem \ref{algthm2} we may generalize Proposition 1.4 in
\cite{To25} to involve continuity of the twisted convolution on
weighted Lebesgue spaces.

\par

\begin{thm}\label{twistedleb}
Assume that $\omega _0,\omega _1,\omega _2\in
\mathscr P(\rr {2d})$ and $p,p_1,p_2 \in [1,\infty]$ satisfy
\begin{align*}
\omega _0(X_1+X_2) &\le C\omega _1(X_1)\omega _2(X_2),\quad
p_1,p_2\le p
\\[1ex]
\text{and} \quad \max \Big ( \frac 1p,\frac 1{p'}\Big )  &\le \frac
1{p_1}+\frac 1{p_2}-\frac 1{p}\le 1,
\end{align*}
for some constant $C$. Then the map \eqref{twistmap} extends
uniquely to a continuous mapping from $L^{p_1}_{(\omega _1)}(\rr
{2d})\times L^{p_2}_{(\omega _2)}(\rr {2d})$ to $L^p_{(\omega _0)}(\rr
{2d})$. Furthermore, for some constant $C$ it holds
\begin{multline*}
\nm {a_1*_\sigma a_2}{L^p_{(\omega _0)}} \le C \nm
{a_1}{L^{p_1}_{(\omega _1)}} \nm {a_2}{L^{p_2}_{(\omega _2)}},
\\[1ex]
\text{when}\quad a_1\in L^{p_1}_{(\omega
_1)}(\rr {2d}),\quad \text{and}\quad a_2\in L^{p_2}_{(\omega _2)}(\rr
{2d}).
\end{multline*}
\end{thm}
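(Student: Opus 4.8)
The plan is to obtain the estimate from two endpoint estimates together with bilinear complex interpolation, keeping the weights $\omega _0,\omega _1,\omega _2$ fixed throughout. Set $\theta =1/p_1+1/p_2-1/p$, so that the hypotheses read $p_1,p_2\le p$ and $\max (1/p,1/p')\le \theta \le 1$. First I would note that multiplication by a weight is an isometry $L^r_{(\omega )}\to L^r$, so that the claim is equivalent to boundedness of the \emph{single} bilinear map $T(g_1,g_2)=\omega _0\cdot ((g_1/\omega _1)*_\sigma (g_2/\omega _2))$ on the corresponding unweighted triples $(L^{p_1},L^{p_2},L^p)$; it is $T$ on which interpolation acts, and the common weight at the two endpoints causes no difficulty.

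The first endpoint is the $L^2$ case. Since $W^{2,2}_{(\omega _j)}$ agrees with $L^2_{(\omega _j)}$ (Remark \ref{p1.7}\,(1)) and the weight condition \eqref{vikt2} reduces, for weights constant in the second $\rr {2d}$-variable, precisely to $\omega _0(X_1+X_2)\le C\omega _1(X_1)\omega _2(X_2)$, Theorem \ref{algthm2} with $p_j=q_j=2$ gives
\[
\nm {a_1*_\sigma a_2}{L^2_{(\omega _0)}}\le C\nm {a_1}{L^2_{(\omega _1)}}\nm {a_2}{L^2_{(\omega _2)}},
\]
as already noted in the introduction; in the above parametrization this is the point $(1/p_1,1/p_2,1/p)=(1/2,1/2,1/2)$, i.e. $\theta =1/2$. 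The second family of endpoints is of Young type. From \eqref{twist1} one has the pointwise bound $|a_1*_\sigma a_2|\le (2/\pi )^{d/2}(|a_1|*|a_2|)$ with $*$ the ordinary convolution, and inserting $\omega _0(X)\le C\omega _1(X-Y)\omega _2(Y)$ under the integral turns this into $|a_1*_\sigma a_2|\,\omega _0\le C((|a_1|\omega _1)*(|a_2|\omega _2))$. The ordinary Young inequality then yields, for every $b_1,b_2,b\in [1,\infty ]$ with $1/b_1+1/b_2-1/b=1$,
\[
\nm {a_1*_\sigma a_2}{L^b_{(\omega _0)}}\le C\nm {a_1}{L^{b_1}_{(\omega _1)}}\nm {a_2}{L^{b_2}_{(\omega _2)}},
\]
valid on the full spaces with no density restriction.

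For a target with $1/2<\theta \le 1$, set $\vartheta =2\theta -1\in (0,1]$ and define the Young point $B=(b_1,b_2,b)$ by $1/b_i=(1/p_i-(1-\vartheta )/2)/\vartheta$ and $1/b=(1/p-(1-\vartheta )/2)/\vartheta$, so that $(1/p_i,1/p)=(1-\vartheta )(1/2,1/2)+\vartheta (1/b_i,1/b)$. A direct computation gives $1/b_1+1/b_2-1/b=1$, so $B$ lies on the Young line, and one checks that $B$ is admissible, i.e. all its exponents lie in $[1,\infty ]$: since $(1-\vartheta )/2=1-\theta$, the bounds $1/b\le 1$ and $1/b_i\le 1$ follow from $\theta \ge 1/p$ and from $p_1,p_2\le p$ respectively, while $1/b\ge 0$ follows from $\theta \ge 1/p'$, and $1/b_i\ge 0$ from $\theta \ge 1/p'\ge 1/p_i'$ (the last using $p_i\le p$). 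Applying bilinear complex interpolation (Calder\'on) with parameter $\vartheta$ to the fixed map $T$, between the $L^2$ estimate and the estimate at $B$, produces the desired bound. The degenerate value $\theta =1/2$ forces $p=p_1=p_2=2$ (as $\max (1/p,1/p')\le 1/2$) and is exactly the $L^2$ endpoint, so no interpolation is needed there.

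The main obstacle is the endpoint bookkeeping in the interpolation step: one must verify that the single Young point $B$ determined by the target is genuinely admissible, and it is precisely here that \emph{both} halves of the hypotheses — the membership $\theta \in [\max (1/p,1/p'),1]$ and the comparisons $p_1,p_2\le p$ — are consumed. A secondary point is the passage from Schwartz functions to the full spaces and uniqueness of the extension: the Young endpoints hold on all of $L^{b_i}_{(\omega _i)}$ directly, and the $L^2$ endpoint is already the continuous extension furnished by Theorem \ref{algthm2}, so interpolation produces a bounded map agreeing with $*_\sigma$ on $\mathscr S\times \mathscr S$; uniqueness is then standard where $p_i<\infty$ by density of $\mathscr S$, and holds in the appropriate weak sense otherwise.
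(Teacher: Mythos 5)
Your proof is correct and follows essentially the same route as the paper's: the $L^2$ endpoint obtained from Theorem \ref{algthm2} via $W^{2,2}_{(\omega )}=L^2_{(\omega )}$, the Young-line endpoint from the pointwise bound $|a_1*_\sigma a_2|\le (2/\pi )^{d/2}|a_1|*|a_2|$ together with the weight condition, and bilinear complex interpolation (Theorem 4.4.1 in \cite{BeLo}) between them. The only differences are presentational: you make explicit the admissibility bookkeeping for the chosen Young point and reduce to a fixed unweighted operator, where the paper instead quotes the interpolation identity for weighted $L^p$ spaces with a common weight and handles the Young endpoint through density of $\mathscr S$ in $L^{p_2}_{(\omega _2)}$ for $p_2<\infty$.
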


\par

\begin{proof}
From the assumptions it follows that at most one of $p_1$ and $p_2$
are equal to $\infty$. By reasons of symmetry we may therefore assume
that $p_2<\infty$.

\par

Since $W^2_{(\omega )}=M^2_{(\omega )}=L^2_{(\omega )}$ when $\omega
(X,Y)=\omega (X)$, in view of Theorem 2.2 in \cite{To5}, the result
follows from Theorem \ref{algthm2} in the case $p_1=p_2=p=2$.

\par

Now assume that $1/p_1+1/p_2-1/p=1$, $a_1\in
L^{p_1}(\rr {2d})$ and that $a_2\in \mathscr S(\rr {2d})$. Then
$$
\nm {a_1*_\sigma a_2}{L^p_{(\omega _0)}}\le (2/\pi )^{d/2}\nm {\,
|a_1|* |a_2|\, }{L^p_{(\omega _0)}} \le C \nm {a_1}{L^{p_1}_{(\omega
_1)}}\nm {a_2}{L^{p_2}_{(\omega _2)}},
$$
by Young's inequality. The result now follows in this case from
the fact that $\mathscr S$ is dense in $L^{p_2}_{(\omega _2)}$, when
$p_2<\infty$.

\par

The result now follows in the general case by multi-linear
interpolation between the case $p_1=p_2=p=2$ and the case
$1/p_1+1/p_2-1/p=1$, using Theorem 4.4.1 in \cite{BeLo} and the fact
that
$$
(L^{p_1}_{(\omega )}(\rr {2d}),(L^{p_2}_{(\omega )}(\rr
{2d}))_{[\theta ]} = L^{p_0}_{(\omega )}(\rr {2d}),
$$
when
$$
\frac {1-\theta}{p_1}+\frac \theta {p_2} = \frac 1{p_0}.
$$
(Cf. Chapter 5 in \cite{BeLo}.) The proof is complete.
\end{proof}

\par

By letting $p_1=p$ and $p_2=q\le \min (p,p')$, or $p_2=p$ and
$p_1=q\le \min (p,p')$, Theorem \ref{twistedleb} takes the following
form:

\par

\begin{cor}\label{twistedlebcor}
Assume that $\omega _0,\omega _1,\omega _2\in
\mathscr P(\rr {2d})$ and $p, q \in [1,\infty]$ satisfy
$$
\omega _0(X_1+X_2)\le C\omega _1(X_1)\omega _2(X_2),\quad
\text{and}\quad q\le \min (p,p')
$$
for some constant $C$. Then the map \eqref{twistmap} extends
uniquely to a continuous mapping from $L^p_{(\omega _1)}(\rr
{2d})\times L^q_{(\omega _2)}(\rr {2d})$ or $L^q_{(\omega _1)}(\rr
{2d})\times L^p_{(\omega _2)}(\rr {2d})$ to $L^p_{(\omega _0)}(\rr
{2d})$. Furthermore, for some constant $C$ it holds
\begin{alignat*}{2}
\nm {a_1*_\sigma a_2}{L^p_{(\omega _0)}} &\le C \nm {a_1}{L^p_{(\omega
_1)}} \nm {a_2}{L^q_{(\omega _2)}},& \  a_1&\in L^p_{(\omega
_1)}(\rr {2d}),\ a_2\in L^q_{(\omega _2)}(\rr {2d})
\intertext{and}
\nm {a_1*_\sigma a_2}{L^p_{(\omega _0)}} &\le C \nm {a_1}{L^q_{(\omega
_1)}} \nm {a_2}{L^p_{(\omega _2)}},& \  a_1&\in L^q_{(\omega
_1)}(\rr {2d}),\ a_2\in L^p_{(\omega _2)}(\rr {2d}).
\end{alignat*}
\end{cor}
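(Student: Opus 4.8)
The plan is to obtain the Corollary as a direct specialization of Theorem \ref{twistedleb}, applied twice: once with the parameter choice $(p_1,p_2)=(p,q)$ to get the first bound, and once with $(p_1,p_2)=(q,p)$ to get the second. In both cases the weight hypothesis of Theorem \ref{twistedleb}, namely $\omega_0(X_1+X_2)\le C\omega_1(X_1)\omega_2(X_2)$, coincides verbatim with the weight hypothesis assumed here, so the only thing to verify is that the condition $q\le\min(p,p')$ translates into the numerical hypotheses $p_1,p_2\le p$ and $\max(1/p,1/p')\le 1/p_1+1/p_2-1/p\le 1$ required by Theorem \ref{twistedleb}.

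First I would take $p_1=p$ and $p_2=q$. Since $q\le\min(p,p')\le p$, we immediately have $p_1,p_2\le p$. Moreover
$$
\frac 1{p_1}+\frac 1{p_2}-\frac 1p = \frac 1p+\frac 1q-\frac 1p = \frac 1q,
$$
so that the remaining inequality becomes $\max(1/p,1/p')\le 1/q\le 1$. The right-hand bound $1/q\le 1$ holds because $q\ge 1$, while the left-hand bound $\max(1/p,1/p')\le 1/q$ is equivalent to $q\le p$ and $q\le p'$, that is to $q\le\min(p,p')$, which is precisely our hypothesis. Thus Theorem \ref{twistedleb} applies and yields the continuity of $*_\sigma$ from $L^p_{(\omega_1)}(\rr{2d})\times L^q_{(\omega_2)}(\rr{2d})$ to $L^p_{(\omega_0)}(\rr{2d})$ together with the first norm estimate.

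For the second bound I would instead take $p_1=q$ and $p_2=p$; the computation $1/p_1+1/p_2-1/p=1/q$ is unchanged, and the same reduction shows that $q\le\min(p,p')$ guarantees all the hypotheses of Theorem \ref{twistedleb}. This produces the continuous mapping from $L^q_{(\omega_1)}(\rr{2d})\times L^p_{(\omega_2)}(\rr{2d})$ to $L^p_{(\omega_0)}(\rr{2d})$ and the second estimate. There is essentially no obstacle beyond the elementary observation that $\max(1/p,1/p')\le 1/q$ is the exact reformulation of $q\le\min(p,p')$; once this is noted, both assertions are immediate consequences of Theorem \ref{twistedleb}, and the uniqueness of the extension is inherited from that theorem as well.
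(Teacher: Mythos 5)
Your proposal is correct and is exactly the paper's own argument: the Corollary is obtained by specializing Theorem \ref{twistedleb} to $(p_1,p_2)=(p,q)$ and $(p_1,p_2)=(q,p)$, with the observation that $1/p_1+1/p_2-1/p=1/q$ and that $\max(1/p,1/p')\le 1/q\le 1$ is precisely the reformulation of $q\le\min(p,p')$ together with $q\ge 1$.
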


\par

In the next section we need the following refinement of Theorem
\ref{twistedleb} concerning mixed Lebesgue spaces.

\par

\renewcommand{\rubrik}{Theorem \ref{twistedleb}$'$}

\begin{tom}
Assume that $k\in \{ 1,2\}$, $\omega _0,\omega _1,\omega _2\in
\mathscr P(\rr {2d})$ and $p,p_j, q,q_j \in [1,\infty]$ for $j=1,2$
satisfy
\begin{align*}
\omega _0(X_1+X_2) &\le C\omega _1(X_1)\omega _2(X_2),\quad p_1,p_2\le
p,\quad q_1,q_2\le q,
\\[1ex]
\max \Big ( \frac 1p,\frac 1{p'},\frac 1q,\frac 1{q'}\Big ) &\le \frac
1{p_1}+\frac 1{p_2}-\frac 1{p}\le 1\quad \text{and}
\\[1ex]
\max \Big ( \frac 1p,\frac 1{p'}, \frac 1q,\frac 1{q'}\Big ) &\le
\frac 1{q_1}+\frac 1{q_2}-\frac 1{q}\le 1,
\end{align*}
for some constant $C$. Then the map \eqref{twistmap} extends
uniquely to a continuous mapping from $L^{p_1,q_1}_{k,(\omega _1)}(\rr
{2d})\times L^{p_2,q_2}_{k,(\omega _2)}(\rr {2d})$ to
$L^{p,q}_{k,(\omega _0)}(\rr {2d})$. Furthermore, for some constant
$C$ it holds
\begin{multline*}
\nm {a_1*_\sigma a_2}{L^{p,q}_{k,(\omega _0)}} \le C \nm
{a_1}{L^{p_1,q_1}_{k, (\omega _1)}} \nm {a_2}{L^{p_2,q_2}_{k,(\omega
_2)}},
\\[1ex]
\text{when}\quad a_1\in L^{p_1,q_1}_{k,(\omega
_1)}(\rr {2d}),\quad \text{and}\quad a_2\in L^{p_2,q_2}_{k,(\omega
_2)}(\rr {2d}).
\end{multline*}
\end{tom}

\par

\begin{proof}
The result follows from Minkowski's inequality when $p_1=q_1=1$ and
when $p_2=q_2=1$. Furthermore, the result follows in the case
$p_1=p_2=q_1=q_2=2$ from Theorem \ref{twistedleb}. In the general
case, the result follows from these cases and multi-linear
interpolation.
\end{proof}

\par

\section{Window functions in modulation space norms}\label{sec2.5}

\par

In this section we use the results in the previous section to prove
that the class of permitted windows in the modulation space norm can
be enlarged. More precisely we have the following.

\par

\begin{prop}\label{possiblewindows}
Assume that $p,p_0,q,q_0\in [1,\infty ]$ and $\omega ,v\in \mathscr
P(\rr {2d})$ are such that $p_0,q_0\le \min (p,p',q,q')$, $\check v=v$ and $\omega$
is $v$-moderate. Also assume that $f\in
\mathscr S'(\rr d)$. Then the following is true:
\begin{enumerate}
\item if $\fy \in M^{p_0,q_0}_{(v)}(\rr d)\setminus 0$, then $f\in
M^{p,q}_{(\omega )}(\rr d)$ if and only if $V_\fy f\in
L^{p,q}_{1,(\omega )}(\rr {2d})$. Furthermore, $\nmm f\equiv \nm
{V_\fy f}{L^{p,q}_{1,(\omega )}}$ defines a norm for $M^{p,q}_{(\omega
)}(\rr d)$, and different choices of $\fy$ give rise to equivalent
norms;

\vrum

\item if $\fy \in W^{p_0,q_0}_{(v)}(\rr d)\setminus 0$, then $f\in
W^{p,q}_{(\omega )}(\rr d)$ if and only if $V_\fy f\in
L^{p,q}_{2,(\omega )}(\rr {2d})$. Furthermore, $\nmm f\equiv \nm
{V_\fy f}{L^{p,q}_{2,(\omega )}}$ defines a norm for $W^{p,q}_{(\omega
)}(\rr d)$, and different choices of $\fy$ give rise to equivalent
norms.
\end{enumerate}
\end{prop}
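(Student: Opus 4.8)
The plan is to compare the candidate quantity $\nm{V_\fy f}{L^{p,q}_{k,(\omega )}}$ (with $k=1$ in part~(1) and $k=2$ in part~(2)) against the genuine modulation space norm built from a reference window $\psi \in \mathscr S(\rr d)\setminus 0$, whose admissibility is guaranteed by Proposition~\ref{p1.4}\,(1). If I can establish the two-sided comparison
$$
\nm{V_\fy f}{L^{p,q}_{k,(\omega )}}\le C\nm{V_\psi f}{L^{p,q}_{k,(\omega )}}\quad\text{and}\quad \nm{V_\psi f}{L^{p,q}_{k,(\omega )}}\le C\nm{V_\fy f}{L^{p,q}_{k,(\omega )}},
$$
then the asserted equivalence $f\in M^{p,q}_{(\omega )}\Leftrightarrow V_\fy f\in L^{p,q}_{1,(\omega )}$ (and its $W$-analogue), the fact that $\nmm{\cdot}$ is a norm, and the independence of the choice of $\fy$ all follow simultaneously by transitivity of these comparisons.

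The mechanism is the change-of-window (reproducing) formula for the short-time Fourier transform. Writing $V_\fy f(X)=(2\pi )^{-d/2}(f,\pi (X)\fy )$ for the time-frequency shift $\pi (X)$, the inversion formula attached to the reference window $\psi$ expresses $V_\fy f$ as an integral of $V_\psi f$ against the cross transform $V_\psi \fy$ carrying a symplectic phase factor; up to a constant this is exactly a twisted convolution, so $|V_\fy f|$ is dominated by a $|V_\psi f|\ast _\sigma |V_\psi \fy |$-type expression. Since $\omega$ is $v$-moderate, the weight $\omega (X)$ splits as $\omega (X-Y)v(Y)$ inside the integral, which transfers the estimate to the weighted spaces $L^{p,q}_{k,(\omega )}$ and $L^{p,q}_{k,(v)}$. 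The membership $\fy \in M^{p_0,q_0}_{(v)}(\rr d)$ means precisely that $V_\psi \fy \in L^{p_0,q_0}_{1,(v)}(\rr {2d})$ (respectively $L^{p_0,q_0}_{2,(v)}$ for part~(2)), and the hypothesis $\check v=v$ guarantees that the reflected window data $V_\fy \psi$ needed for the reverse inequality lies in the same weighted space, because $|V_\fy \psi (X)|=|V_\psi \fy (-X)|$.

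The decisive step is to feed this into Theorem~\ref{twistedleb}$'$ with $(p_1,q_1)=(p,q)$, $(p_2,q_2)=(p_0,q_0)$ and target $(p,q)$. For these exponents the requirements $p_1,p_2\le p$ and $q_1,q_2\le q$ are automatic from $p_0,q_0\le \min (p,p',q,q')$, while $1/p_1+1/p_2-1/p=1/p_0$ and $1/q_1+1/q_2-1/q=1/q_0$, so the remaining conditions $\max (1/p,1/p',1/q,1/q')\le 1/p_0\le 1$ and $\max (1/p,1/p',1/q,1/q')\le 1/q_0\le 1$ reduce to exactly the standing hypothesis $p_0,q_0\le \min (p,p',q,q')$. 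Hence Theorem~\ref{twistedleb}$'$ yields $\nm{V_\fy f}{L^{p,q}_{k,(\omega )}}\le C\nm{V_\psi f}{L^{p,q}_{k,(\omega )}}\nm{V_\psi \fy }{L^{p_0,q_0}_{k,(v)}}$, and interchanging the roles of $\psi$ and $\fy$ (legitimate since $V_\fy \psi$ lies in the same space, again by $\check v=v$) gives the reverse bound with $\nm{V_\fy \psi }{L^{p_0,q_0}_{k,(v)}}<\infty$ in place of the window factor.

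The main obstacle is not the interpolation input, which is already packaged inside Theorem~\ref{twistedleb}$'$, but two more delicate points. First, I must justify the reproducing identity for a general $f\in \mathscr S'(\rr d)$ and for a window $\fy$ that is merely a modulation-space distribution, so that $\pi (X)\fy \notin \mathscr S$ and $V_\fy f$ has to be interpreted through the duality of Proposition~\ref{p1.4}\,(3); I would prove the identity first for $f,\fy \in \mathscr S$ and then extend by density and narrow convergence, approximating via Proposition~\ref{p2.3} and using Fatou's lemma as in Remark~\ref{p2.2} to pass to the limit on both sides of the comparison. Second, I must pin down the precise symplectic phase in the change-of-window formula and carry the moderate weight through the convolution so that Theorem~\ref{twistedleb}$'$ applies verbatim; tracking the phase convention and the weight simultaneously is where the bookkeeping is heaviest, and it is the step most likely to hide a sign or normalization subtlety.
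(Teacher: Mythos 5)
Your proposal is correct and follows essentially the same route as the paper: a two-sided window comparison reduced to the estimate $\nm {V_\fy f}{L^{p,q}_{k,(\omega )}}\le C\nm {V_\psi f}{L^{p,q}_{k,(\omega )}}\nm {V_\fy \psi }{L^{p_0,q_0}_{k,(v)}}$, obtained by writing the change of window as an exact twisted-convolution identity and feeding it into Theorem \ref{twistedleb}$'$ with exponents $(p_1,q_1)=(p,q)$, $(p_2,q_2)=(p_0,q_0)$, with the $\check v=v$ symmetry giving the reverse bound. The phase and dilation bookkeeping you flag as the delicate step is exactly what the paper packages via the Wigner distribution, using $W_{f,g}(x,\xi )=2^de^{i\scal x\xi /2}V_{\check g}f(2x,2\xi )$ together with the identity $W_{f_1,g_1}*_\sigma W_{f_2,g_2}=(\check f_2,g_1)_{L^2}W_{f_1,g_2}$, so that Theorem \ref{twistedleb}$'$ applies verbatim with the dilated weights $\omega (2\, \cdot \, )$ and $v(2\, \cdot \, )$.
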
 

\par

For the proof we recall that the (cross) Wigner distribution of $f\in
\mathscr S'(\rr d)$ and $g\in \mathscr S'(\rr d)$ is defined by the
formula
$$
W_{f,g}(x,\xi )=\mathscr F (f(x/2-\cdo )\overline {g(x/2+\cdo
)}(\xi ).
$$
If $f,g\in \mathscr S(\rr d)$, then $W_{f,g}$ takes the form
$$
W_{f,g}(x,\xi )=(2\pi )^{-d/2}\int
f(x-y/2)\overline{g(x+y/2)}e^{i\scal y\xi}\, dy.
$$
From the definitions it follows that
$$
W_{f,g}(x,\xi )=2^de^{i\scal x\xi /2}V_{\check g}f(2x,2\xi ),
$$
which implies that
\begin{equation}\label{stftwiennorms}
\nm {W_{f,\check \fy}}{L^{p,q}_{k,(\omega _0)}} = 2^d\nm {V_\fy
f}{L^{p,q}_{k,(\omega )}},\quad \text{when}\quad \omega _0(x,\xi
)=\omega (2x,2\xi )
\end{equation}
for $k=1,2$.

\par

Finally, by Fourier's inversion formula it follows that if $f_1,g_2\in
\mathscr S'(\rr d)$ and $f_1,g_2\in L^2(\rr d)$, then
\begin{equation}\label{wigntwconv}
W_{f_1,g_1}*_\sigma W_{f_2,g_2}  = (\check f_2,g_1)_{L^2}W_{f_1,g_2}.
\end{equation}

\par

\begin{proof}[Proof of Theorem \ref {possiblewindows}.]
We may assume that $p_0=q_0=\min (p,p',q,q')$. Assume that $\fy ,\psi \in
M^{p_0,q_0}_{(v)}(\rr d)\subseteq L^2(\rr d)$, where the inclusion follows from the fact that $p_0,q_0\le 2$ and $v\ge c$ for some constant $c>0$. Since $\omega$
is both $v$-moderate and $\check v$-moderate, and $\nm {V_\fy \psi }
{L^{p_0,q_0}_{k,(v)}}=\nm {V_\psi \fy}{L^{p_0,q_0}_{k,(v)}}$ when $\check v=v$, the
result follows if we prove that
\begin{equation}\label{eststft3}
\nm {V_\fy f}{L^{p,q}_{k,(\omega )}}\le C\nm {V_\psi
f}{L^{p,q}_{k,(\omega )}}\nm {V_\fy \psi }{L^{p_0,q_0}_{k,(v)}},
\end{equation}
for some constant $C$ which is independent of $f\in \mathscr S'(\rr
d)$ and $\fy ,\psi \in M^{p_0,q_0}_{(v)}(\rr d)$.

\par

If $p_1=p$, $p_2=p_0$, $q_1=q$, $q_2=q_0$, $\omega _0=\omega
(2\cdo )$ and $v_0=v(2\cdo )$, then Theorem
\ref{twistedleb}$'$ and \eqref{wigntwconv} give
\begin{multline*}\label{wigntwconvnorm}
\nm {V_\fy f}{L^{p,q}_{k,(\omega )}} = C_1\nm
{W_{f,\check \fy}}{L^{p,q}_{k,(\omega _0)}}
\\[1ex]
=C_2\nm {W_{f,\check \psi}*_\sigma W_{\psi,\check
\fy}}{L^{p,q}_{k,(\omega _0)}}\le C_3 \nm {W_{f,\check \psi}}
{L^{p,q}_{k,(\omega _0)}}\nm {W_{\psi ,\check \fy}}{L^{p_0,q_0}_{k,(v_0)}}
\\[1ex]
=C_4 \nm {V_\psi f}{L^{p,q}_{k,(\omega )}}\nm {V_\fy \psi }{L^{p_0,q_0}_{k,(v)}},
\end{multline*}
and \eqref{eststft3} follows. The proof is complete.
\end{proof}

\par

\section{Pseudo-differential operators
with symbols in modulation
spaces}\label{sec3}

\par

In this section we discuss continuity of pseudo-differential operators
with symbols in modulation spaces of the form $W^{p,q}_{(\omega )}$, when acting between
modulation spaces.

\par

The main result is the following theorem, which is based on
Proposition \ref{cordokodjP3.3}.

\par

\begin{thm}\label{Wpseudos}
Assume that $p,q\in [1,\infty ]$, $\omega _1,\omega _2\in \mathscr
P(\rr {2d})$ and $\omega \in \mathscr P(\rr {4d})$ are such that
\begin{equation}\label{omegascond}
\frac {\omega _2(x,\xi +\eta )}{\omega _1(x+y,\xi )} \le C\omega
(x,\xi ,\eta ,y)
\end{equation}
holds for some constant $C$ which is independent of $x,y,\xi ,\eta \in
\rr d$. Also assume that $a\in W^{q,p}_{(\omega )}(\rr {2d})$. Then
the definition of $a(x,D)$ from $\mathscr S(\rr d)$ to $\mathscr
S'(\rr d)$ extends uniquely to a continuous mapping from
$M^{p',q'}_{(\omega _1)}(\rr d)$ to $W^{q,p}_{(\omega _2)}(\rr
d)$. Furthermore, it holds
$$
\nm {a(x,D)f}{W^{q,p}_{(\omega _2)}} \le C\nm {a}{W^{q,p}_{(\omega
)}}\nm f{M^{p',q'}_{(\omega _1)}}
$$
for some constant $C$ which is independent of $f\in M^{p',q'}_{(\omega
_1)}(\rr d)$ and $a\in W^{q,p}_{(\omega )}(\rr {2d})$.
\end{thm}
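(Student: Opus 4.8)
The plan is to prove the norm inequality by duality, turning the mapping estimate into a pairing against the symbol and then into a time-frequency estimate for a Rihaczek-type distribution of $f$ and the dual variable; this last estimate is exactly what Proposition \ref{cordokodjP3.3} provides. Throughout I would work first with $a\in \mathscr S(\rr {2d})$ and $f\in \mathscr S(\rr d)$, where $a(x,D)f$ is given by \eqref{e0.5}, and recover the general statement at the end by density together with the duality below. For such $a$ and $f$ I would estimate the output norm through Proposition \ref{p1.4}(3), applied to the $W$-spaces on $\rr d$:
$$
\nm {a(x,D)f}{W^{q,p}_{(\omega _2)}}\le C\sup \, \abp {(a(x,D)f,g)_{L^2}},
$$
the supremum running over $g\in \mathscr S(\rr d)$ with $\nm g{W^{q',p'}_{(1/\omega _2)}}\le 1$.

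The next step is to move the operator onto the symbol. A direct computation from \eqref{e0.5}, using $\int f(y)e^{-i\scal y\xi}\,dy=(2\pi )^{d/2}\widehat f(\xi )$, gives
$$
(a(x,D)f,g)_{L^2}=(a,R(g,f))_{L^2},\qquad R(g,f)(x,\xi )=(2\pi )^{-d/2}g(x)\,\overline {\widehat f(\xi )}\,e^{-i\scal x\xi },
$$
so the pairing becomes that of the symbol $a$ against the (cross-)Rihaczek distribution $R(g,f)$ on $\rr {2d}$. Applying Proposition \ref{p1.4}(3) once more, this time on $\rr {2d}$ and to the pair $a\in W^{q,p}_{(\omega )}$, $R(g,f)\in W^{q',p'}_{(1/\omega )}$, yields
$$
\abp {(a,R(g,f))_{L^2}}\le C\,\nm a{W^{q,p}_{(\omega )}}\,\nm {R(g,f)}{W^{q',p'}_{(1/\omega )}}.
$$

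It then remains to bound the last factor by $\nm f{M^{p',q'}_{(\omega _1)}}\,\nm g{W^{q',p'}_{(1/\omega _2)}}$, and this is precisely the role of Proposition \ref{cordokodjP3.3}. Computing the (symplectic) short-time Fourier transform of $R(g,f)$ expresses it through $V_\psi f$ evaluated near $(x+y,\xi )$ and $V_\gamma g$ evaluated near $(x,\xi +\eta )$; the Fourier transform in $\xi$ built into $R(g,f)$ is exactly what turns the $M$-norm of $f$ into the $W$-type quantity appearing in the statement. The weight hypothesis \eqref{omegascond}, rewritten as $\omega _2(x,\xi +\eta )\le C\,\omega (x,\xi ,\eta ,y)\,\omega _1(x+y,\xi )$, is then exactly the pointwise inequality that lets $1/\omega$ be absorbed into $\omega _1$ and $1/\omega _2$. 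Inserting this into the two previous displays and taking the supremum over $g$ gives the asserted bound on $\mathscr S$; the extension to all $f\in M^{p',q'}_{(\omega _1)}$ and $a\in W^{q,p}_{(\omega )}$ then follows from the density of $\mathscr S$ when the relevant exponents are finite, and from weak-$*$ density (Proposition \ref{p1.4}(4)) together with the lower semicontinuity in Remark \ref{p2.2} at the endpoints $p=\infty$ or $q=\infty$.

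I expect the main obstacle to lie in this last estimate and its bookkeeping. One must track the mixed-norm exponents correctly through the two applications of the duality in Proposition \ref{p1.4}(3) — note the deliberate order reversal between $W^{q,p}$ and $M^{p',q'}$ and the passage to conjugate exponents — and verify that the arguments $(x,\xi +\eta )$, $(x+y,\xi )$ and $(x,\xi ,\eta ,y)$ of the weights in \eqref{omegascond} are exactly the ones produced by the short-time Fourier transform of $R(g,f)$ in Proposition \ref{cordokodjP3.3}. Once the variables are aligned, the weight absorption and the final H\"older/Young bookkeeping in the mixed norms are routine, but getting the phase factors and the change of variables in the Rihaczek computation to match the hypothesis is the delicate point.
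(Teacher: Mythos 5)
Your main estimate is, modulo a Parseval transcription, the paper's own argument. The paper writes $(a(x,D)f,g)_{L^2}=(2\pi )^{-d/2}(T\widehat a,V_fg)_{L^2}$ (formula \eqref{pseudoSTFT2}), applies the duality of Proposition \ref{p1.4}(3) on the Fourier side, uses the exact identity $\nm {T\widehat a}{M^{p,q}_{(\omega _0^{-1})}}=\nm a{W^{q,p}_{(\omega )}}$, and bounds $\nm {V_fg}{M^{p',q'}_{(\omega _0)}}$ by Proposition \ref{cordokodjP3.3}. Your pairing $(a,R(g,f))_{L^2}$ is the same identity before taking Fourier transforms: indeed $\mathscr F\bigl(R(g,f)\bigr)(\eta ,y)=(2\pi )^{-d/2}V_fg(-y,\eta )$, so your key bound on $\nm {R(g,f)}{W^{q',p'}_{(1/\omega )}}$ is precisely the paper's estimate \eqref{Vfgigen} for $\nm {V_fg}{M^{p',q'}_{(\omega _0)}}$ with $\omega _0(x,\xi ,\eta ,y)=\omega (-y,\eta ,\xi ,-x)^{-1}$, and your rewriting of \eqref{omegascond} matches the hypothesis \eqref{omegarel1} of that proposition after the same change of variables. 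So the heart of your proof is correct and coincides with the paper's; the choice of working on the position side rather than the Fourier side is cosmetic.

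The genuine gap is in your closing extension step. The exponents for which norm density of $\mathscr S$ in the \emph{source} space $M^{p',q'}_{(\omega _1)}$ fails are $p=1$ or $q=1$ (these make $p'=\infty$ or $q'=\infty$), not $p=\infty$ or $q=\infty$ as you state. Consequently your two mechanisms leave the cases $p=1,\ 1\le q<\infty$ and $q=1,\ 1\le p<\infty$ (for instance $p=1$, $q=2$) entirely uncovered, and even in the corner cases $(p,q)=(1,\infty )$ and $(\infty ,1)$ the tools you cite do not suffice: Proposition \ref{p1.4}(4) gives weak density of $\mathscr S$ only in $M^{\infty}_{(\omega )}=M^{\infty ,\infty}_{(\omega )}$, and the Fatou-type lower semicontinuity of Remark \ref{p2.2} yields the norm bound for a limit but neither well-definedness nor uniqueness of the extension. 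The paper needs two further ideas here, both absent from your sketch. For $p=1,\ q<\infty$ or $q=1,\ p<\infty$ it reverses the roles: fixing $f\in M^{p',q'}_{(\omega _1)}$ arbitrary, it uses that $a(x,D)f$ makes sense (and lands in $\mathscr S$) for $a\in \mathscr S(\rr {2d})$, and extends in the \emph{symbol} variable, exploiting that $\mathscr S$ is then norm dense in $W^{q,p}_{(\omega )}$ since both exponents are finite. For the remaining cases $(p,q)=(1,\infty )$, $(\infty ,1)$ it invokes density of $\mathscr S$ with respect to \emph{narrow convergence} (Proposition \ref{p2.3} and the surrounding discussion), a notion your proposal never uses. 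Without these, the theorem is only established for $p,q>1$.
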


\par

The proof is based on duality, using the fact that if $a\in \mathscr S'(\rr {2d})$, $f,g\in \mathscr S(\rr d)$ and $T$ is the operator, defined by
\begin{align}
(T\psi )(x,\xi ) &=\psi (\xi ,-x)\label{pseudoSTFT1}
\intertext{when $\psi \in \mathscr S'(\rr {2d})$, then}
(a(x,D)f,g)_{L^2(\rr d)} &= (2\pi )^{-d/2}(T(\mathscr F
a),V_fg)_{L^2(\rr {2d})},\label{pseudoSTFT2}
\end{align}
by Fourier's inversion formula.

\par

For the proof of  Theorem \ref{Wpseudos} we shall combine \eqref{pseudoSTFT2} with the following weighted version of Proposition 3.3 in \cite{CO1}.

\par

\begin{prop}\label{cordokodjP3.3}
Assume that $f_1,f_2\in \mathscr S'(\rr d)$, $p,q\in [1,\infty
]$, $\omega _0\in \mathscr P(\rr {4d})$ and $\omega _1,\omega
_2\in \mathscr P(\rr {2d})$. Also assume that $\fy _1,\fy _2\in
\mathscr S(\rr d)$, and let $\Psi =V_{\fy _1}\fy
_2$. Then the following is true:
\begin{enumerate}
\item if
\begin{equation}\label{omegarel1}
\omega _0(x,\xi ,\eta ,y)\le C\omega
_1(-x-y,\eta ) \omega _2(-y,\xi +\eta )
\end{equation}
for some constant $C$, then
$$
\nm {V_\Psi (V_{f_1}f_2)}{L^{p,q}_{1,(\omega _0)}} \le C\nm
{V_{\fy _1}f_1}{L^{p,q}_{1,(\omega _1)}}\nm {V_{\fy
_2}f_2}{L^{q,p}_{2,(\omega _2)}}\text ;
$$

\vrum

\item if
\begin{equation}\label{omegarel2}
\omega _1(-x-y,\eta ) \omega _2(-y,\xi +\eta )\le C\omega _0(x,\xi
,\eta ,y)
\end{equation}
for some constant $C$, then
$$
\nm {V_{\fy _1}f_1}{L^{p,q}_{1,(\omega _1)}}\nm {V_{\fy
_2}f_2}{L^{q,p}_{2,(\omega _2)}}\le C\nm {V_\Psi
(V_{f_1}f_2)}{L^{p,q}_{1,(\omega _0)}}\text ;
$$

\vrum

\item if \eqref{omegarel1} and \eqref{omegarel2} hold for some constant $C$, then $f_1\in
M^{p,q}_{(\omega _1)}(\rr d)$ and $f_2\in W^{q,p}_{(\omega _2)}(\rr
d)$, if and only if $V_{f_1}f_2\in M^{p,q}_{(\omega _0)}(\rr {2d})$,
and
$$
C^{-1}\nm {V_{f_1}f_2}{M^{p,q}_{(\omega _0)}}\le \nm
{f_1}{M^{p,q}_{(\omega _1)}}\nm {f_2}{W^{q,p}_{(\omega _2)}}
\le C\nm {V_{f_1}f_2}{M^{p,q}_{(\omega _0)}},
$$
for some constant $C$ which is independent of $f_1$ and $f_2$.
\end{enumerate}
\end{prop}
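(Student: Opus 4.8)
The plan is to deduce all three statements from a single pointwise factorization of $V_\Psi (V_{f_1}f_2)$, after which the weighted mixed-norm estimates follow by elementary changes of variables.

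First I would establish the key identity. Setting $\Psi = V_{\fy _1}\fy _2$ and unwinding the definitions of $V_{f_1}f_2$ (as a distribution on $\rr {2d}$) and of $V_\Psi$ applied to it, a computation using Fourier's inversion formula — the weighted analogue of the corresponding step in the proof of Proposition 3.3 in \cite{CO1} — gives that
\begin{equation*}
\abp {V_\Psi (V_{f_1}f_2)(x,\xi ,\eta ,y)}
= \abp {V_{\fy _1}f_1(-x-y,\eta )}\,
\abp {V_{\fy _2}f_2(-y,\xi +\eta )}
\end{equation*}
for all $x,\xi ,\eta ,y\in \rr d$ (up to a fixed positive constant and a unimodular factor, both immaterial once absolute values are taken). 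This is the crucial and most delicate step: one must carry the several substitutions through carefully and verify that the two output arguments are \emph{exactly} $(-x-y,\eta )$ and $(-y,\xi +\eta )$, so that they match the weight relations \eqref{omegarel1} and \eqref{omegarel2}. Once the identity is secured, the remaining work is bookkeeping.

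For (1), I would multiply the identity by $\omega _0(x,\xi ,\eta ,y)$, bound $\omega _0$ from above by $C\,\omega _1(-x-y,\eta )\omega _2(-y,\xi +\eta )$ using \eqref{omegarel1}, and take the $L^{p,q}_{1}$ norm over $\rr {4d}$, with $(x,\xi )$ as the inner ($L^p$) variable and $(\eta ,y)$ as the outer ($L^q$) variable. In the inner integral the measure-preserving substitution $(x,\xi )\mapsto (-x-y,\xi +\eta )$ (with $\eta ,y$ fixed) turns the integrand into a product of a function of the $f_1$-argument and a function of the $f_2$-argument, so the inner integral factors as a product $A(\eta )B(y)$; raising to the power $q/p$ and integrating over $(\eta ,y)$ then factors once more. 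The two resulting factors are precisely $\nm {V_{\fy _1}f_1}{L^{p,q}_{1,(\omega _1)}}$ and $\nm {V_{\fy _2}f_2}{L^{q,p}_{2,(\omega _2)}}$ — the swap of $p$ and $q$ and the passage from the index $1$ to the index $2$ being exactly what this factorization produces — which yields the asserted inequality. Note that the factorization is purely algebraic and requires no relation between $p$ and $q$.

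Since the pointwise relation is an \emph{equality}, part (2) follows by running the same computation in the reverse direction: multiplying the identity by $\omega _1(-x-y,\eta )\omega _2(-y,\xi +\eta )$, bounding this product above by $C\,\omega _0$ via \eqref{omegarel2}, and factoring the left-hand side exactly as before. Finally (3) is immediate: under both \eqref{omegarel1} and \eqref{omegarel2} the estimates in (1) and (2) combine to a two-sided bound, and since $\Psi = V_{\fy _1}\fy _2\in \mathscr S(\rr {2d})\setminus 0$ is an admissible window, Proposition \ref{p1.4} identifies $\nm {V_\Psi (V_{f_1}f_2)}{L^{p,q}_{1,(\omega _0)}}$ with $\nm {V_{f_1}f_2}{M^{p,q}_{(\omega _0)}}$, while the definitions give $\nm {V_{\fy _1}f_1}{L^{p,q}_{1,(\omega _1)}}=\nm {f_1}{M^{p,q}_{(\omega _1)}}$ and $\nm {V_{\fy _2}f_2}{L^{q,p}_{2,(\omega _2)}}=\nm {f_2}{W^{q,p}_{(\omega _2)}}$, giving the stated norm equivalence.
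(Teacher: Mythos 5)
Your proposal is correct and takes essentially the same route as the paper's own proof: both rest on the pointwise identity $\abp {V_\Psi (V_{f_1}f_2)(x,\xi ,\eta ,y)}=\abp {V_{\fy _1}f_1(-x-y,\eta )}\, \abp {V_{\fy _2}f_2(-y,\xi +\eta )}$ obtained from Fourier's inversion formula (cited rather than rederived in both cases), followed by the weight bound \eqref{omegarel1}, the change of variables, and the factorization of the mixed-norm integral into $\nm {V_{\fy _1}f_1}{L^{p,q}_{1,(\omega _1)}}\nm {V_{\fy _2}f_2}{L^{q,p}_{2,(\omega _2)}}$. Your treatment of (2) as the reverse estimate and of (3) via window-independence of the $M^{p,q}_{(\omega _0)}$ norm (Proposition \ref{p1.4}) is exactly what the paper leaves implicit when it says it suffices to prove (1) and (2).
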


\par

\begin{proof}
It suffices to prove (1) and (2), and then we prove only
(1), since (2) follows by similar arguments. We shall mainly follow
the proof of Proposition 3.3 in \cite{CO1}, and then we only prove
the result in the case $p<\infty$ and $q<\infty$. The small
modifications when $p=\infty$ or $q=\infty$ are left for the reader.

\par

By Fourier's inversion formula we have
$$
|V_\fy f_1(-x-y,\eta )V_\fy f_2(-y,\xi +\eta
)|=|V_{\Psi}(V_{f_1}f_2)(x,\xi ,\eta ,y)|
$$
(cf. e.{\,}g. \cite{CO1,Fo1,Groc2,To0,To1}). Hence, if
$$
F_1(x,\xi ) = V_{\fy _1} f_1(x,\xi )\omega _1(x,\xi ) \quad
\text{and}\quad F_2(x,\xi ) = V_{\fy _2} f_2(x,\xi )\omega _2(x,\xi ),
$$
then we get
\begin{multline*}
\nm {V_\Psi (V_{f_1}f_2)}{L^{p,q}_{1,(\omega _0)}}^q
\\[1ex]
=\iint _{\rr {2d}}\Big
(\iint _{\rr {2d}}|V_\Psi (V_{f_1}f_2)(x,\xi ,\eta ,y)\omega _0(x,\xi
,\eta ,y)|^p\, dxd\xi \Big )^{q/p}\, dyd\eta 
\\[1ex]
\le C^q \iint _{\rr {2d}}\Big (\iint _{\rr {2d}}|F_1(-x-y,\eta )
F_2(-y,\xi+\eta ))|^p\, dxd\xi \Big )^{q/p}\, dyd\eta .
\end{multline*}
By taking $-y$, $\xi +\eta$, $-x-y$ and $\eta$ as new variables of
integration, we obtain
\begin{multline*}
\nm {V_\Psi (V_{f_1}f_2)}{L^{p,q}_{1,(\omega _0)}}^q
\\[1ex]
\le C^q\Big (\int _{\rr
d}\Big ( \int _{\rr d} |F_1(x,\eta )|^p\, dx
\Big )^{q/p}\, d\eta \Big )\Big ( \int _{\rr d}\Big (
\int _{\rr d} |F_2(y,\xi )|^p\, d\xi \Big
)^{q/p}\, dy \Big )
\\[1ex]
=C^q\nm {V_{\fy _1}f_1}{L^{p,q}_{1,(\omega _1)}}^q\nm {V_{\fy
_2}f_2}{L^{q,p}_{2,(\omega _2)}}^q.
\end{multline*}
This proves the assertion.
\end{proof}

\par

\begin{proof}[Proof of Theorem \ref{Wpseudos}.]
We may assume that \eqref{omegascond} holds for $C=1$ and with
equality. We start to prove the result when $1<p$ and $1<q$.
Let
$$
\omega _0(x,\xi ,\eta ,y ) = \omega (-y,\eta ,\xi ,-x)^{-1},
$$
and assume that $a\in W^{q,p}_{(\omega )}(\rr {2d})$ and $f,g\in
\mathscr S(\rr d)$. Then $a(x,D)f$ makes sense as an element in
$\mathscr S'(\rr d)$. 

\par

By Proposition \ref{cordokodjP3.3}  we get
\begin{equation}\label{Vfgigen}
\nm {V_fg}{M^{p',q'}_{(\omega _0)}}\le C\nm f{M^{p',q'}_{(\omega
_1)}}\nm g{W^{q',p'}_{(\omega _2^{-1})}}.
\end{equation}

\par

Furthermore, if $T$ is the same as in \eqref{pseudoSTFT1}, then it
follows by Fourier's inversion formula that
$$
(V_{\fy }(T\widehat a))(x,\xi ,\eta ,y )=e^{-i(\scal x\eta +\scal y\xi )}(V_{T\widehat \fy }a)(-y,\eta ,\xi
,-x).
$$
This gives
\begin{multline*}
|(V_{\fy}(T\widehat a))(x,\xi ,\eta ,y )\omega _0(x,\xi ,\eta
,y)^{-1}|
\\[1ex]
=|(V_{\fy _1}a)(-y,\eta ,\xi ,-x)\omega (-y,\eta ,\xi ,-x)|,
\end{multline*}
when $\fy _1=T\widehat \fy$. Hence, by applying the $L^{p,q}_1$ norm we obtain $\nm {T\widehat
a}{M^{p,q}_{(\omega _0^{-1})}}=\nm a{W^{q,p}_{(\omega )}}$.

\par

It  now follows from (12) and \eqref{Vfgigen} that
\begin{multline}\label{estimates1}
|(a(x,D)f,g)|=(2\pi )^{-d/2}|(T\widehat a,V_{\overline g}\overline
f)|
\\[1ex]
\le C_1\nm {T\widehat a}{M^{p,q}_{(\omega _0^{-1})}}\nm
{V_{f}g}{M^{p',q'}_{(\omega _0)}}
\\[1ex]
\le C_2 \nm a{W^{q,p}_{(\omega )}}\nm f{M^{p',q'}_{(\omega _1)}}\nm
g{W^{q',p'}_{(\omega _2^{-1})}}.
\end{multline}
The result now follows by the facts that $\mathscr S(\rr d)$ is dense
in $M^{p',q'}_{(\omega _1)}(\rr d)$, and that the dual of
$W^{q',p'}_{(\omega _2^{-1})}$ is $W^{q,p}_{(\omega _2)}$ when
$p,q>1$.

\par

If instead $p=1$ and $q<\infty$, or  $q=1$ and $p<\infty$, then we
assume that $f\in M^{p',q'}_{(\omega _1)}$ and $a\in \mathscr S(\rr
{2d})$. Then $a(x,D)f$ makes sense as an element in $\mathscr S(\rr
d)$, and from the first part of the proof it follows that
\eqref{estimates1} still holds. The result now follows by duality and
the fact that $\mathscr S(\rr {2d})$ is dense in $W^{q,p}_{(\omega
)}(\rr {2d})$ for such choices of $p$ and $q$.

\par

It remains to consider the cases $p=q'=1$ and $p=q'=\infty$. In this
case, the result follows by using the fact that $\mathscr S$ is dense
in $M^{\infty ,1}_{(\omega )}$ and $W^{1,\infty}_{(\omega )}$ with
respect to the narrow convergence. The proof is complete.
\end{proof}

\vspace{1cm}

\end{document}